\documentclass[10pt]{elsarticle}
\usepackage{geometry, enumerate, amsthm, amsmath, amssymb, amscd, verbatim, color, graphicx}
\usepackage{hyperref}
\usepackage{courier}
\usepackage[all]{xy}

\numberwithin{equation}{section}

\newtheorem{Prop}[equation]{Proposition}
\newtheorem{Thm}[equation]{Theorem}
\newtheorem{Lem}[equation]{Lemma}
\newtheorem{Cor}[equation]{Corollary}
\theoremstyle{definition}\newtheorem{Def}[equation]{Definition}
\newtheorem{Defs}[equation]{Definitions}
\newtheorem{Ex}[equation]{Example}
\newtheorem{Rem}[equation]{Remark}
\theoremstyle{definition}

\newcommand{\N}{\mathbb{N}}
\newcommand{\Q}{\mathbb{Q}}
\newcommand{\Z}{\mathbb{Z}}
\newcommand{\F}{\mathbb{F}}
\newcommand{\Int}{\textnormal{Int}}

\newcommand{\mfm}{\mathfrak{m}}

\newcommand{\whA}{\widehat{A}}
\newcommand{\whB}{\widehat{B}}
\newcommand{\whD}{\widehat{D}}
\newcommand{\whF}{\widehat{F}}
\newcommand{\whK}{\widehat{K}}
\newcommand{\whP}{\widehat{P}}

\newcommand{\whT}{\widehat{T}}

\newcommand{\whm}{\widehat{\mathfrak{m}}}

\newcommand{\mcD}{\mathcal{D}}

\journal{Journal of Pure and Applied Algebra}

\begin{document}

\begin{frontmatter}

\title{Decomposition of Integer-valued Polynomial Algebras}

\author{Giulio Peruginelli}
\ead{gperugin@math.unipd.it}
\address{Department of Mathematics, University of Padova, Via Trieste, 63
35121 Padova, Italy}

\author{Nicholas J. Werner}
\ead{wernern@oldwestbury.edu}
\address{Department of Mathematics, Computer and Information Science, SUNY College at Old Westbury, P.O.\ Box 210, Old Westbury, NY 11568}

\begin{abstract}
\noindent Let $D$ be a commutative domain with field of fractions $K$, let $A$ be a torsion-free $D$-algebra, and let $B$ be the extension of $A$ to a $K$-algebra. The set of integer-valued polynomials on $A$ is $\Int(A) = \{f \in B[X] \mid f(A) \subseteq A\}$, and the intersection of $\Int(A)$ with $K[X]$ is $\Int_K(A)$, which is a commutative subring of $K[X]$. The set $\Int(A)$ may or may not be a ring, but it always has the structure of a left $\Int_K(A)$-module. 

A $D$-algebra $A$ which is free as a $D$-module and of finite rank is called $\Int_K$-decomposable if a $D$-module basis for $A$ is also an $\Int_K(A)$-module basis for $\Int(A)$; in other words, if $\Int(A)$ can be generated by $\Int_K(A)$ and $A$. A classification of such algebras has been given when $D$ is a Dedekind domain with finite residue rings. In the present article, we modify the definition of $\Int_K$-decomposable so that it can be applied to $D$-algebras that are not necessarily free by defining $A$ to be $\Int_K$-decomposable when $\Int(A)$ is isomorphic to $\Int_K(A) \otimes_D A$. We then provide multiple characterizations of such algebras in the case where $D$ is a discrete valuation ring or a Dedekind domain with finite residue rings. In particular, if $D$ is the ring of integers of a number field $K$, we show that an $\Int_K$-decomposable algebra $A$ must be a maximal $D$-order in a separable $K$-algebra $B$, whose simple components have as center the same finite unramified Galois extension $F$ of $K$ and are unramified at each finite place of $F$. Finally, when both $D$ and $A$ are rings of integers in number fields, we prove that $\Int_K$-decomposable algebras correspond to unramified Galois extensions of $K$.\\

\noindent Keywords: Integer-valued polynomial, algebra, Int-decomposable, Maximal order, Finite unramified Galois extension

\noindent MSC Primary 13F20 Secondary 16H10, 11C99
\end{abstract}
\end{frontmatter}

%%%%%%%%%%%%%%%%%%%
%%%%%%%%%%%%%%%%%%%
%%%%%%%%%%%%%%%%%%%
\section{Introduction}
Let $D$ be a commutative integral domain with field of fractions $K$. The ring of integer-valued polynomials over $D$ is defined to be $\Int(D) := \{f \in K[X] \mid f(D) \subseteq D\}$. The ring $\Int(D)$, its elements, and its properties have been popular objects of study over the past several decades and continue to be so today. The book \cite{CaCh} is the standard reference on the topic.

Beginning around 2010, attention turned to polynomials that are evaluated on $D$-algebras rather than on $D$ itself. This can be seen in the work of Evrard, Fares and Johnson \cite{EFJ, EvrJoh}, Frisch \cite{Fri1, Fri1Corr, Fri2, FriTriangular}, Loper \cite{LopWer}, Peruginelli \cite{ChabPer, HeidaryanLongoPer, Per1, PerDivDiff, PerFinite, PerWer}, Werner \cite{Wer, Wer2, Wer1}, and Naghipour, Rismanchian, and  Sedighi Hafshejani \cite{NagRisSed}. A good example of these new rings of integer-valued polynomials comes from considering the polynomials in $K[X]$ that map each element of the matrix algebra $M_n(D)$ back to $M_n(D)$. 

\begin{Ex}\label{M_n(K) example}
Associate $K$ with the scalar matrices in $M_n(K)$. Then, for any polynomial $f(X) = \sum_{i=0}^t q_i X^i \in K[X]$ and any matrix $a \in M_n(D)$, we can evaluate $f$ at $a$ to produce the matrix $f(a) = \sum_{i=0}^t q_i a^i$. If $f(a) \in M_n(D)$ for each $a \in M_n(D)$, then $f$ is said to be integer-valued on $M_n(D)$. The set of all such polynomials is denoted by 
$$\Int_K(M_n(D)) := \{f \in K[X] \mid f(M_n(D)) \subseteq M_n(D)\},$$
and it is easy to verify that $\Int_K(M_n(D))$ is a subring of $K[X]$. 

We can form a larger collection of polynomials that are integer-valued on $M_n(D)$ by considering polynomials whose coefficients come from $M_n(K)$ rather than from $K$. That is, we form the set 
$$\Int(M_n(D)) := \{f \in M_n(K)[X] \mid f(M_n(D)) \subseteq M_n(D)\}.$$
Since $M_n(K)$ is noncommutative, we follow standard conventions regarding polynomials with non-commuting coefficients, as in \cite[\S 16]{Lam1}. In $M_n(K)[X]$, 
we assume that the indeterminate $X$ commutes with each element of $M_n(K)$, and we define evaluation to occur when the indeterminate is to the right of any coefficients. So, given $f(X) = \sum_{i=0}^t q_i X^i \in M_n(K)[X]$, we consider $f(X)$ to be equal to $\sum_{i=0}^t X^i q_i$ as an element of $M_n(K)[X]$, but to evaluate $f(X)$ at a matrix $a \in M_n(D)$, we must first write $f(X)$ in the form $f(X) = \sum_{i=0}^t q_i X^i$, and then $f(a) = \sum_{i=0}^t q_i a^i$. A consequence of this is that evaluation is no longer a multiplicative homomorphism; that is, if $f(X) = g(X) h(X)$ in $M_n(K)[X]$, then it may not be true that $f(a)$ equals $g(a) h(a)$. Because of this difficulty, it is not clear whether $\Int(M_n(D))$ is closed under multiplication. Despite the complications associated with evaluation of polynomials in this setting, one may prove that $\Int(M_n(D))$ is a (noncommutative) subring of $M_n(K)[X]$ \cite[Thm. 1.2]{WerMat}. Thus, we are able to construct a noncommutative ring of integer-valued polynomials. 

We can actually say more. In \cite[Thm. 7.2]{Fri1}, Sophie Frisch proved that $\Int(M_n(D))$ is itself a matrix ring. Specifically, $\Int(M_n(D)) \cong M_n(\Int_K(M_n(D)))$, where the isomorphism is given by associating a polynomial with matrix coefficients to a matrix with polynomial entries. (This isomorphism is the restriction of the classical isomorphism between the polynomial ring $M_n(K)[X]$ and the matrix ring $M_n(K[X])$). Because of Frisch's theorem, many questions about $\Int(M_n(D))$ can be reduced to questions about $\Int_K(M_n(D))$, and the latter ring---being commutative---is usually easier to work with.
\end{Ex}

Broadly speaking, the point of this paper is to study the relationship between a commutative ring of integer-valued polynomials such as $\Int_K(M_n(D))$ and its extension $\Int(M_n(D))$. In particular, we wish to determine when and how Frisch's theorem \cite[Thm. 7.2]{Fri1} can be generalized to algebras other than matrix rings. While matrix rings will be prominent in our work, the majority of our theorems deal with  general algebras. However, our basic definitions are inspired by the situation described in Example \ref{M_n(K) example}. 

We begin by giving notation and conventions for working with polynomials over algebras. As before, let $D$ be a commutative integral domain with field of fractions $K$. Let $A$ be a torsion-free $D$-algebra and take $B = K \otimes_D A$ to be the extension of $A$ to a $K$-algebra. We associate $K$ and $A$ with their canonical images in $B$ via the maps $k \mapsto k \otimes 1$ and $a \mapsto 1 \otimes a$. Much of our work will involve polynomials in $B[X]$. The algebra $B$ may be noncommutative, but we will assume that $X$ commutes with all elements of $B$. Moreover, we define evaluation of polynomials in $B[X]$ at elements of $A$ just as we did in Example \ref{M_n(K) example} where $A = M_n(D)$ and $B = M_n(K)$. Given  $f(X)=\sum_{i=0}^t c_i X^i \in B[X]$ and $b \in B$, we define 
\begin{equation*}
f(b): = \sum_{i=0}^t c_i b^i.
\end{equation*}
Note that the map $B[X] \to B$ given by evaluation at $b$ is not a multiplicative homomorphism unless $b$ lies in the center of $B$.

Finally, we define
\begin{equation*}
\Int(A) :=\{f \in B[X] \mid f(A) \subseteq A\}
\end{equation*}
and
\begin{equation*}
\Int_K(A) := \Int(A) \cap K[X] = \{f \in K[X] \mid f(A) \subseteq A\}.
\end{equation*}
We will also require that $A \cap K = D$; this assumption is equivalent to the containment $\Int_K(A)\subseteq\Int(D)$.
\begin{Def}\label{Standard assumptions}
When $A$ is a torsion-free $D$-algebra such that $A \cap K = D$, we say that $A$ is a $D$-algebra with \textit{standard assumptions}. When $A$ is finitely generated as a $D$-module, we say that $A$ is of \textit{finite type}.
\end{Def}

With these definitions, it is clear that $\Int_K(A)$ is always a subring of the commutative ring $K[X]$. The algebraic structure of $\Int(A)$ is more difficult to analyze. It is straightforward to verify that $\Int(A)$ is closed under addition, and in fact has the structure of a left $\Int_K(A)$-module. However, because $B[X]$ may contain polynomials with non-commuting coefficients, there is no guarantee that $\Int(A)$ is closed under multiplication. Indeed, let $g, h \in \Int(A)$ and let $f = gh$ be the product of $g$ and $h$ in $B[X]$. Then, we have $g(a), h(a) \in A$ for all $a \in A$, but because $f(a)$ need not equal $g(a)h(a)$, it is not clear whether or not $f(a)$ is in $A$. Thus, we arrive at an important question: is $\Int(A)$ a ring when $B$ is noncommutative?

There are cases where $\Int(A)$ has been proved to be closed under multiplication, and thus has a ring structure under the usual operations inherited from $B[X]$. For instance, this will be true if $A$ itself is a commutative ring. More generally, if each element of $A$ is a sum of units and central elements, then $\Int(A)$ is a ring \cite[Thm. 1.2]{WerMat}. In particular, this theorem applies when $A = M_n(D)$, the algebra of $n \times n$ matrices with entries in $D$, because $M_n(D)$ has a $D$-module basis consisting of invertible matrices. The condition in \cite[Thm. 1.2]{WerMat} is sufficient for $\Int(A)$ to be a ring, but is not necessary; counterexamples may be found in \cite[Ex. 3.8]{Wer1} and in \cite{FriTriangular}. To date, no example has been given of a noncommutative $D$-algebra $A$ for which $\Int(A)$ is not a ring. 

As mentioned in Example \ref{M_n(K) example}, Frisch proved in \cite[Thm. 7.2]{Fri1} that the rings $\Int(M_n(D))$ and $M_n(\Int_K(M_n(D)))$ are isomorphic. This result led the second author to search for other algebras with a similar property \cite{Wer}. To do this, the problem was recast in the following way. Assume that $A$, as a $D$-module, is free of finite rank, with $D$-basis $\alpha_1, \ldots, \alpha_t$. Then, when does $\alpha_1, \ldots, \alpha_t$ form a basis for $\Int(A)$ as an $\Int_K(A)$-module? With this formulation, Frisch's theorem shows that
\begin{equation}\label{IntMnD1}
\Int(M_n(D)) = \bigoplus_{1 \leq i, j \leq n} \Int_K(M_n(D)) E_{ij}
\end{equation}
where the $E_{ij}$ are the standard matrix units that form a $D$-basis of $M_n(D)$. An algebra $A = \bigoplus_i D \alpha_i$ such that 
\begin{equation}\label{Decomp def eq}
\Int(A) = \bigoplus_i \Int_K(A) \alpha_i
\end{equation}
is called $\textit{Int}_K$-\textit{decomposable} with respect to $\{\alpha_i\}_i$. 
By \cite[Prop. 1.4]{Wer}, this property is independent of the $D$-basis chosen for $A$. Thus, a free $D$-algebra $A$ such that \eqref{Decomp def eq} holds is called simply $\Int_K$-decomposable. We will use the adjective \textit{Int-decomposable} (with no subscript $K$) when we wish to speak of these algebras collectively, without reference to a specific domain $D$ or base field $K$.

The main theorem of \cite{Wer} proved that there is a close connection between Int-decomposable algebras and direct sums of matrix algebras. In \cite[Thm. 6.1]{Wer} it is shown that for $D$ a Dedekind domain with finite residue rings and $A$ a free $D$-algebra of finite rank, $A$ is $\Int_K$-decomposable if and only if for each nonzero prime $P$ of $D$, there exist $n, t \in \N$ and a finite field $\F_q$ such that $A/PA \cong \bigoplus_{i=1}^t M_n(\F_q)$. 

The work in \cite{Wer} depended crucially on the presence of a $D$-module basis for $A$, and it was desirable to know if $\Int$-decomposable algebras could be defined and studied without assuming that $A$ was free. This is indeed possible, and doing so is the focus of the current paper. The key insight was to notice that an $\Int_K$-decomposable algebra is one for which $\Int(A)$ can be generated (as a subring of $B[X]$) by $A$ and $\Int_K(A)$, and this property can be precisely expressed in terms of tensor products of $D$-algebras. We say that $A$ is $\Int_K$-decomposable if and only if $\Int(A) \cong \Int_K(A) \otimes_D A$ (see Definition \ref{Int-decomp def}); the only limitations we impose on $A$ are our standard assumptions that $A$ is torsion-free and $A \cap K = D$. Note that for the case of the full matrix algebra $A=M_n(D)$ considered initially by Frisch, the matrix ring $M_n(\Int_K(M_n(D)))$ is canonically isomorphic to the ring $\Int_K(M_n(D))\otimes_D M_n(D)$.

If $D$ is Dedekind with finite residue rings and $A$ is finitely generated as a $D$-module, then we are able to extend the classification given by \cite[Thm. 6.1]{Wer} (Theorem \ref{Int-decomp classification}). Hence, $\Int_K$-decomposable algebras are those which are residually a direct sum of copies of a matrix ring over a finite field. Moreover, we are able to obtain two alternate characterizations of $\Int$-decomposability, one in terms of the completions of $A$ at primes of $D$ (Theorem \ref{Int_K-decomposability Theorem}) and the other in terms of the extended $K$-algebra $B = K \otimes_D A$ (Theorem \ref{Int-decomposable algebras over number fields}). We are also able to describe when $\Int_K(A)= \Int(D)$ (Theorem \ref{When Int_K(A) = Int(D)}), which answers a question raised in \cite{Fri2}. 

In Section \ref{Int-decomp Sec}, we state the more general definition of $\Int_K$-decomposable and prove several of the theorems mentioned above. Section \ref{Int-decomp Completion Section} discusses the classification of $\Int_K$-decomposable algebras in terms of completions. As our work will show, $\Int_K$-decomposable algebras are related to matrix algebras via their residue rings and completions, but the two types of algebras are not the same. Theorem \ref{Implications} clarifies this situation by presenting various counterexamples. 

We close the paper by studying the consequences of Theorems \ref{Int-decomp classification} and \ref{Int_K-decomposability Theorem}. An easy corollary of our classification theorems is that an $\Int_K$-decomposable algebra $A$ must be a maximal $D$-order in the $K$-algebra $B$, and $B$ must be a semisimple $K$-algebra. Along these lines, in Section \ref{Extended classification}, we use the theory of maximal orders (as presented in \cite{Reiner}) to establish the last part of our classification. In Theorem \ref{Int-decomposable algebras over number fields} we prove that if $D$ is the ring of integers of a number field $K$, then $A$ is $\Int_K$-decomposable if and only if the following four conditions are satisfied: $B$ is a separable $K$-algebra with simple components which have the same center $F$; $F$ is a finite unramified Galois extension of $K$; the simple components of $B$ are unramified at each finite place of $F$; and $A$ is a maximal $D$-order in $B$.

From this general theorem, we obtain two relevant corollaries. First, if $A$ is the ring of integers of a finite extension $L$ of $K$, then $A$ is $\Int_K$-decomposable if and only if $L$ is an unramified Galois extension of $K$ (Corollary \ref{Int-decomp alg int}). Second, if $D = \Z$, then $A$ is $\Int_\Q$-decomposable if and only if for some $n$, $A$ is isomorphic to a finite direct sum of copies of $M_n(\Z)$ (Corollary \ref{IntQ decomposable algebras}). This last result implies that if $\Int_\Q(A) = \Int(\Z)$, then $A$ is isomorphic to a finite direct sum of copies of $\Z$.

%%%%%%%%%%%%%%%%%%%
%%%%%%%%%%%%%%%%%%%
%%%%%%%%%%%%%%%%%%%
\section{Int-decomposable Algebras}\label{Int-decomp Sec}

We begin by recalling the definition of $\Int_K$-decomposability in the case of free $D$-algebras which was given in \cite{Wer}.

\begin{Def}\label{Int-decomp 1 def}(\cite[Def. 1.2]{Wer})
Let $A$ be a $D$-algebra that, as a $D$-module, is free of finite rank, so that $A = \bigoplus_{i=1}^t D \alpha_i$ for some $D$-module basis $\{ \alpha_1, \ldots, \alpha_t\}$. We say that $A$ is \textit{$Int_K$-decomposable} with respect to $\{\alpha_i\}_{i=1}^t$ if $\Int(A) = \bigoplus_{i=1}^t \Int_K(A) \alpha_i$ as an $\Int_K(A)$-module. 
\end{Def}

It is shown in \cite[Prop. 1.4]{Wer} that the $\Int_K$-indecomposability of $A$ does not depend on the $D$-module basis $\{\alpha_1, \ldots, \alpha_t\}$. That is, $A$ is $\Int_K$-decomposable with respect to one basis if and only if it is $\Int_K$-decomposable with respect to every basis. Thus, we can---and will---say algebras are $\Int_K$-decomposable without referring to a specific basis.

A useful way to interpret Definition \ref{Int-decomp 1 def} is the following. Assume $A = \bigoplus_i D \alpha_i$. Then, we have $B = \bigoplus_i K \alpha_i$ and it follows that any $f \in B[X]$ can be expressed (uniquely) in the form $f = \sum_i f_i \alpha_i$, where each $f_i \in K[X]$. If $f \in \Int(A)$ and $A$ is $\Int_K$-decomposable then we may conclude that each $f_i \in \Int_K(A)$. This property can sometimes be used to quickly show that an algebra is not Int-decomposable. For example, let $D = \Z$ and $A = \Z[i]$, the Gaussian integers. Then, $\frac{(1+i)(X^2-X)}{2} \in \Int(A)$, but $\frac{(X^2-X)}{2} \notin \Int_\Q(A)$; hence, $A = \Z[i]$ is not $\Int_\Q$-decomposable. As we shall see in Corollary \ref{Int-decomp alg int}, this is related to the fact that 2 is a ramified prime of $\Z[i]$.

The most prominent examples of $\Int_K$-decomposable algebras are the matrix rings $M_n(D)$. As mentioned in the introduction, Frisch proved in \cite[Thm. 7.2]{Fri1} that when $A = M_n(D)$, there is a $D$-algebra isomorphism between $M_n(\Int_K(A))$ and $\Int(A)$, as in (\ref{IntMnD1}). Some rings of algebraic integers and certain quaternion algebras can also be Int-decomposable \cite[Sec. 6]{Wer}. 

The main theorem of \cite{Wer} shows that $\Int_K$-decomposable algebras can be recognized by the structure of their residue rings $A/PA$, where $P$ runs through the primes of $D$. 

\begin{Thm}\label{Old characterization}(\cite[Thm. 6.1]{Wer})
Let $D$ be a Dedekind domain with finite residue rings. Let $A$ be a free $D$-algebra of finite rank with standard assumptions. Then, $A$ is $\Int_K$-decomposable if and only if for each nonzero prime $P$ of $D$, there exist $n, t \in \N$ and a finite field $\F_q$ such that $A/PA \cong \bigoplus_{i=1}^t M_n(\F_q)$.
\end{Thm}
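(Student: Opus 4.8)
The plan is to reduce the global statement to a family of local conditions, one at each nonzero prime $P$ of $D$, and then to translate each local condition into a statement about the finite residue ring $\olA := A/PA$, regarded as an algebra over the finite field $\F := D/P$. Since $D$ is Dedekind with finite residue rings, the formation of $\Int$ and $\Int_K$ commutes with localization at $P$, so that $\Int(A)_P = \Int(A_P)$ and $\Int_K(A)_P = \Int_K(A_P)$, and the decomposition \eqref{Decomp def eq} holds globally if and only if it holds after localizing at every $P$. As $A_P/PA_P \cong \olA$, this reduces the theorem to the case where $D$ is a discrete valuation ring with uniformizer $\pi$ and finite residue field $\F$, which I will assume henceforth.

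Next I would establish a mod-$P$ criterion. Writing $f = \sum_i f_i \alpha_i \in \Int(A)$ with $f_i \in K[X]$, decomposability asks that each $f_i$ lie in $\Int_K(A)$; clearing denominators and reducing modulo $P$, I expect to show that the failure of this is detected entirely inside $\olA$. Precisely, $A$ should be $\Int_K$-decomposable if and only if, for every $\olg \in \F[X]$ and every $\bar\alpha \in \olA$, the identity $\bar\alpha\,\olg(\bar a) = 0$ holding for all $\bar a \in \olA$ forces $\bar\alpha = 0$ or $\olg(\olA) = 0$. The $\Z[i]$ example already exhibits the mechanism: there $\olA \cong \F_2[\epsilon]/(\epsilon^2)$, and with $\bar\alpha = \epsilon$ and $\olg = X^2 - X$ one has $\bar\alpha\,\olg(\bar a) = 0$ identically while $\olg$ is not null on $\olA$, which is exactly the obstruction carried by $\tfrac{(1+i)(X^2-X)}{2}$. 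Proving this equivalence, i.e. matching the $\Int_K(A)$-module decomposition of $\Int(A)$ to this annihilator condition on $\olA$, is the technical heart of the argument and the step I expect to be the main obstacle.

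Granting the criterion, necessity follows in two stages. If the Jacobson radical $J(\olA)$ is nonzero, then choosing a nonzero nilpotent $\bar\alpha \in J(\olA)$ together with a null polynomial of the semisimple quotient $\olA/J(\olA)$ should yield a pair $(\bar\alpha,\olg)$ violating the criterion, generalizing the $\Z[i]$ computation; hence $\olA$ is semisimple. By Wedderburn--Artin and Wedderburn's theorem on finite division rings, $\olA \cong \bigoplus_{i=1}^t M_{n_i}(\F_{q_i})$ with each $\F_{q_i}$ finite. If two simple components had distinct pairs $(n_i,q_i)$, their null ideals in $\F[X]$ would differ (the null polynomial of $M_{n_i}(\F_{q_i})$, which over its center $\F_{q_i}$ is $\prod_{d=1}^{n_i}(X^{q_i^{d}} - X)$, determines $(n_i,q_i)$), so there would be $\olg \in \F[X]$ null on one component but not another; taking $\bar\alpha$ to be the corresponding central idempotent again violates the criterion. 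Thus all components coincide and $\olA \cong \bigoplus_{i=1}^t M_n(\F_q)$.

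For sufficiency, suppose $\olA \cong \bigoplus_{i=1}^t M_n(\F_q)$ at every $P$. Because the simple components share a single null polynomial, a polynomial $\olg$ that is non-null on $\olA$ is non-null on every component, and the simplicity of $M_n(\F_q)$ together with the conjugation-invariance of the image $\olg\bigl(M_n(\F_q)\bigr)$ shows that this image is left-annihilated by no nonzero element; this verifies the criterion, hence decomposability. Alternatively, one may lift the Wedderburn decomposition of $\olA$ by Hensel-lifting idempotents and matrix units to identify the completion of $A$ with a finite product of matrix rings over an unramified extension of $\whD$, and then invoke Frisch's theorem \cite[Thm. 7.2]{Fri1} componentwise, decomposability being stable under finite products. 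The first route is more elementary, while the second anticipates the maximal-order techniques developed later in the paper.
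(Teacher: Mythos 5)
First, a point of reference: the paper does not prove this statement at all --- it is imported verbatim from \cite[Thm. 6.1]{Wer}, and the machinery behind it reappears later only in quoted form (Lemma \ref{Decomposability equivalence} and Lemma \ref{decomposition Ak}). Measured against that machinery, your proposal has a genuine gap, and it sits exactly where you predicted. Everything funnels through the asserted equivalence between $\Int_K$-decomposability and your level-one annihilator criterion on $\olA = A/PA$. The direction ``decomposability $\Rightarrow$ criterion'' is indeed easy: lift $\bar\alpha\,\olg$ to $\alpha g(X)/\pi \in \Int(A)$, expand $\alpha$ in a $D$-basis, and use uniqueness of coordinates. But the converse, ``criterion $\Rightarrow$ decomposability,'' which your first sufficiency route uses outright (``this verifies the criterion, hence decomposability''), is essentially the whole theorem. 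An element of $\Int(A)$ has denominator $\pi^k$ with $k$ arbitrary, so decomposability is equivalent to $N$-decomposability of $A/P^kA$ for \emph{every} $k\geq 1$ (this is Lemma \ref{Decomposability equivalence}), and your $k=1$ criterion gives no control over $N(A/P^kA)$ for $k\geq 2$. Worse, even at level one the criterion is a priori weaker than what is needed: it quantifies only over ``rank-one'' polynomials $\bar\alpha\,\olg(X)$ with $\olg \in \F[X]$, whereas $N$-decomposability concerns arbitrary null polynomials $\sum_i \bar g_i(X)\bar\alpha_i$. Closing this gap --- passing from the residue structure $\olA \cong \bigoplus_{i=1}^t M_n(\F_q)$ to decomposability at all levels $k$ --- is precisely the content of \cite[Thm. 5.10]{Wer} (Lemma \ref{decomposition Ak} here), proved there by classifying $N$-decomposable finite rings as matrix rings over chain rings; your Route 1 is therefore circular. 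In the necessity direction there is a second, smaller gap: your pair $(\bar\alpha, \olg)$ with $\bar\alpha \in J(\olA)^m$ only violates the criterion if some null polynomial of $\olA/J(\olA)$ fails to be null on $\olA$, and this lemma (a divided-difference computation, as in your $M_2(\F_2[\epsilon])$ example, but for an arbitrary simple bimodule over a semisimple finite algebra) is asserted (``should yield'') rather than proved.

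Your Route 2 (Hensel-lift idempotents and matrix units in $\whA$, then quote Frisch componentwise) is much closer to a correct sufficiency proof, but its key step does not do what you need. Frisch's theorem applied to $M_n(\whT)$, with $\whT$ the unramified extension of $\whD$ having residue field $\F_q$, gives $\Int(M_n(\whT)) \cong M_n(\Int_{\whF}(M_n(\whT)))$, i.e.\ decomposability over the base $\whT$ with coefficient field $\whF$. The theorem concerns coefficients in $K[X]$, i.e.\ decomposability over $D$. Descending from $\whF[X]$-coefficients to $\whK[X]$-coefficients across $\whT/\whD$ --- in effect proving $\Int_{\whF}(M_n(\whT)) \cong \Int_{\whK}(M_n(\whT))\otimes_{\whD}\whT$, or the corresponding null-ideal statement modulo $\pi^k$ for every $k$ --- is exactly where unramifiedness and the Galois-ring structure of the chain rings $T_k$ enter, and it is the heart of \cite{Wer}; it is not a formal consequence of Frisch's theorem (for a ramified extension such as $\Z[i]$ over $\Z_{(2)}$ the analogous descent fails, as your own example shows). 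Two further repairs are needed: ``decomposability is stable under finite products'' is false as stated --- $M_n(D)\oplus M_m(D)$ with $n \neq m$ is decomposable factorwise but not decomposable, by Theorem \ref{Implications} --- though it does hold for finitely many copies of one and the same algebra, which is all you use; and you must transfer decomposability between $A$ and its completion $\whA$, which requires an argument in the style of Lemma \ref{DVR lemma}.
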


Both Definition \ref{Int-decomp 1 def} and the proof of Theorem \ref{Old characterization} depended on the presence of a $D$-basis for $A$. Our first goal in this paper is to generalize the definition of $\Int_K$-decomposable so that it applies to algebras that are not necessarily free. We will then go on to show (Theorem \ref{Int-decomp classification}) that Theorem \ref{Old characterization} still holds under this more general definition.

\begin{Def}\label{Int-decomp def}
Let $D$ be an integral domain and $A$ a torsion-free $D$-algebra. Consider the following $D$-bilinear map:
\begin{align*}
\Int_K(A) \times A &\to \Int(A)\\
(f(X),a) &\mapsto f(X)\cdot a
\end{align*}
By the universal property of the tensor product, there exists a unique $D$-module homomorphism 
\begin{align}\label{Phi}
\Phi:\Int_K(A)\otimes_D A&\to \Int(A)
\end{align}
which maps every elementary tensor product $f(X)\otimes a$ to $f(X)\cdot a$.  We say that $A$ is \textit{$Int_K$-decomposable} if $\Phi$ is an isomorphism of $D$-modules, so that  $\Int(A) \cong \Int_K(A) \otimes_D A$.

Recall that the tensor product $\Int_K(A)\otimes_D A$ has a natural $D$-algebra structure with multiplication given by $(f_1(X)\otimes a_1)(f_2(X)\otimes a_2)=(f_1(X)f_2(X))\otimes( a_1a_2)$ (see \cite[Chapt. III, \S 4, n. 1]{BourbakiAlg}) , for all $f_i\in\Int_K(A), a_i\in A$, $i=1,2$. Moreover, since the elements of $K$ are central in $B$, the map $\Phi$ above induces a $D$-algebra structure on $\Int(A)$ (and so, $\Phi$ becomes a $D$-algebra homomorphism). Therefore, when $A$ is $\Int_K$-decomposable, $\Phi$ is an isomorphism of $D$-algebras.
\end{Def}

This definition has a number of immediate consequences, among them that Definition \ref{Int-decomp def} reduces to the original Definition \ref{Int-decomp 1 def} when $A$ is free.

\begin{Prop}\label{Int-decomp properties}\mbox{}
\begin{enumerate}[(1)]
\item If $A$ is $\Int_K$-decomposable, then $\Int(A)$ is a ring.
\item The following are equivalent:
\begin{enumerate}[(i)]
\item $A$ is $\Int_K$-decomposable.
\item the $D$-module $\Int(A)$ satisfies the universal property of the tensor product of $\Int_K(A)$ and $A$.
\item $\Int(A)$ is equal to the subring of $B[X]$ generated by $\Int_K(A)$ and $A$.
\end{enumerate}
\item Assume $A = \bigoplus_{i=1}^t D \alpha_i $ is free of finite rank as a $D$-module. Then, $A$ is $\Int_K$-decomposable in the sense of Definition \ref{Int-decomp 1 def} if and only if $A$ is $\Int_K$-decomposable in the sense of Definition \ref{Int-decomp def}. \end{enumerate}
\end{Prop}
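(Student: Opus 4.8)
The plan is to prove the three parts in the order (2), then (3), then (1), since the logical core is the equivalence in part (2), and the other two parts follow cleanly once the characterizations there are in hand. Throughout, the key object is the map $\Phi:\Int_K(A)\otimes_D A\to\Int(A)$ of \eqref{Phi}, whose image is precisely the $\Int_K(A)$-submodule of $\Int(A)$ generated by $A$, equivalently (since $K$ is central in $B$) the subring of $B[X]$ generated by $\Int_K(A)$ and $A$. The overall strategy is to show that ``$\Phi$ is an isomorphism'' can be translated into each of the stated conditions, the main leverage being that $\Int_K(A)\otimes_D A$ already carries a $D$-algebra structure and that $\Phi$ respects it because scalars from $K$ commute with everything in $B[X]$.

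For part (2), I would argue (i)$\Leftrightarrow$(ii) directly from the definition of the universal property: a $D$-module $M$ equipped with a $D$-bilinear map $\Int_K(A)\times A\to M$ satisfies the universal property of the tensor product exactly when the induced map $\Int_K(A)\otimes_D A\to M$ is an isomorphism. Here $M=\Int(A)$ with the bilinear map $(f,a)\mapsto f\cdot a$, and the induced map is $\Phi$; so (ii) is merely a restatement of ``$\Phi$ is an isomorphism,'' which is (i). For (i)$\Leftrightarrow$(iii), I would note that $\Phi$ is always injective on the relevant structure or instead argue the equivalence through images: the image of $\Phi$ is the subring $S$ of $B[X]$ generated by $\Int_K(A)$ and $A$ (one checks $S$ is spanned over $\Int_K(A)$ by products of elements of $A$, and since $A$ is a $D$-algebra these products stay in $A$, so $S$ is already the $\Int_K(A)$-span of $A$). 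Thus $\Phi$ is surjective iff $\Int(A)=S$, which is (iii). The content is then to see that surjectivity of $\Phi$ already forces injectivity, or to verify injectivity separately; I would handle injectivity by choosing a $K$-basis of $B$ refining a generating set of $A$ and tracking coefficients, so that an elementary-tensor relation mapping to $0$ in $B[X]$ forces the corresponding coefficient relations, which live in the torsion-free module $\Int_K(A)\otimes_D A$.

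For part (3), assume $A=\bigoplus_{i=1}^t D\alpha_i$ is free. Then $\Int_K(A)\otimes_D A\cong\bigoplus_{i=1}^t\Int_K(A)\otimes_D D\alpha_i\cong\bigoplus_{i=1}^t\Int_K(A)\alpha_i$ as $\Int_K(A)$-modules, and under this identification $\Phi$ sends $\sum_i f_i\otimes\alpha_i$ to $\sum_i f_i\alpha_i\in\Int(A)$. Hence $\Phi$ is an isomorphism precisely when $\Int(A)=\bigoplus_{i=1}^t\Int_K(A)\alpha_i$, which is exactly Definition \ref{Int-decomp 1 def}. The only subtlety is that $\Phi$ is always injective in the free case: the $\alpha_i$ are $K$-linearly independent in $B$, so a relation $\sum_i f_i\alpha_i=0$ in $B[X]$ with $f_i\in K[X]$ forces every $f_i=0$; thus $\Phi$ is an isomorphism iff it is surjective iff the direct-sum equality holds. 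For part (1), if $A$ is $\Int_K$-decomposable then by the final paragraph of Definition \ref{Int-decomp def} the isomorphism $\Phi$ transports the $D$-algebra structure of $\Int_K(A)\otimes_D A$ to $\Int(A)$; concretely, this shows $\Int(A)$ is closed under the multiplication inherited from $B[X]$, so $\Int(A)$ is a ring.

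I expect the main obstacle to be the injectivity of $\Phi$ in part (2) under the general (possibly non-free) standard assumptions: unlike the free case, one cannot simply read off coefficients against a basis, and one must instead exploit that $A$ is torsion-free and that $B=K\otimes_D A$ together with flatness of localization to reduce to a setting where $A$ embeds in $B$ and elements of $\Int_K(A)\otimes_D A$ can be detected by their images in $B[X]$. Establishing this carefully—ensuring no torsion phenomena obstruct injectivity—is the delicate point; the surjectivity half and parts (1) and (3) are then comparatively routine consequences.
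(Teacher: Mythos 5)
Your proposal follows the paper's own route on all three parts: (i)$\Leftrightarrow$(ii) is read off from the universal property; (i)$\Leftrightarrow$(iii) is argued by identifying the image of $\Phi$ with the subring of $B[X]$ generated by $\Int_K(A)$ and $A$ (using that elements of $K[X]$ are central in $B[X]$, so any word in the generators collapses to $f\cdot a$ with $f\in\Int_K(A)$, $a\in A$); part (3) comes from $\Int_K(A)\otimes_D\bigl(\bigoplus_i D\alpha_i\bigr)\cong\bigoplus_i\Int_K(A)\alpha_i$ together with the observation that $\Phi$ is automatically injective in the free case; and part (1) comes from transporting the $D$-algebra structure of $\Int_K(A)\otimes_D A$ through $\Phi$. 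All of this is correct and matches the paper's proof.

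The one place you diverge is the injectivity of $\Phi$ in part (2), which you rightly note is needed to pass from (iii) (which is exactly surjectivity of $\Phi$) back to (i) ($\Phi$ an isomorphism). Two remarks here. First, your proposed argument for injectivity does not work in the stated generality: ``choosing a $K$-basis of $B$ refining a generating set of $A$ and tracking coefficients'' only makes sense when $A$ is free (it is precisely your part (3) argument), and ``detecting elements of $\Int_K(A)\otimes_D A$ by their images in $B[X]$'' presupposes that the natural map $\Int_K(A)\otimes_D A\to B[X]$ is injective, which is the very point at issue; for a torsion-free but non-flat $A$ (possible over a general domain, since torsion-free implies flat only over Pr\"ufer-type domains) this map can have a kernel that no coefficient-tracking in $B[X]$ can see. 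Second, and in your defense, the paper's own proof of (2) never addresses injectivity either: it implicitly treats $\Int_K$-decomposability as surjectivity of $\Phi$ in this step, and injectivity is only established later, in Lemma~\ref{Phi injective}, under the additional hypotheses that $D$ is Dedekind and $A$ is of finite type, where torsion-free implies projective, hence flat, so that $\Int_K(A)\otimes_D A$ embeds in $K[X]\otimes_D A\cong B[X]$. So the correct way to close your gap is not basis-tracking but flatness of $A$, available exactly under the hypotheses of Lemma~\ref{Phi injective}; in full generality the implication (iii)$\Rightarrow$(i) is not justified by the paper's argument either.
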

\begin{proof}
(1) This is a generalization of \cite[Prop. 2.2]{Wer}. When $A$ is $\Int_K$-decomposable, it is actually isomorphic as a ring to the $D$-algebra $\Int_K(A) \otimes_D A$.

(2) The equivalence of (i) and (ii) is clear. For (iii), note that for any $A$ ($\Int_K$-decomposable or not), the module $\Int(A)$ contains all products of the form $f(X) \cdot a$, where $f \in \Int_K(A)$ and $a \in A$. Since $\Int(A)$ is closed under addition, it also contains the subring of $B[X]$ generated by $\Int_K(A)$ and $A$. Given the definition of $\Phi$, $A$ being $\Int_K$-decomposable is equivalent to $\Int(A)$ equaling this subring.

(3) Since $A = \bigoplus_i D \alpha_i$, we have the chain of equalities
\begin{equation*}
\Int_K(A)\otimes_D A=\Int_K(A)\otimes_D(\bigoplus_i D\alpha_i)=\bigoplus_i (\Int_K(A)\otimes_D D\alpha_i)=\bigoplus_i \Int_K(A)\alpha_i
\end{equation*}
from which the equivalence of the definitions is clear.
\end{proof}

\begin{Rem}\mbox{}
\begin{itemize}
\item While $A$ being $\Int_K$-decomposable implies that $\Int(A)$ is a ring, the converse is not true. There are numerous examples of $D$-algebras $A$ which are not $\Int_K$-decomposable but still $\Int(A)$ is a ring. For instance, when $G$ is a finite group and $A$ is the group algebra $DG$, $\Int(A)$ is a ring by \cite[Thm. 1.2]{WerMat}. However, whenever the characteristic of $D/P$ divides $|G|$, the group ring $A/PA \cong (D/P)G$ is not semisimple \cite[Cor. 3.4.8]{MiliesSehgal}, and hence cannot satisfy Theorem \ref{Old characterization}. Thus, the group algebra $DG$ will not be $\Int_K$-decomposable in such cases.

Also, if $p$ is an odd prime of $\Z$, $D = \Z_{(p)}$, and $A$ is a free $D$-algebra of finite rank, then for each $k > 0$ the residue ring $A/p^kA$ has odd order. It then follows from \cite[Thms. 2.4, 3.7]{Wer1} that $\Int(A)$ is a ring; but certainly $A$ can be chosen so that $A$ is not $\Int_\Q$-decomposable.

Finally, let $T_n(D)$ be the $D$-algebra of $n\times n$ upper triangular matrices. Frisch has recently shown \cite{FriTriangular} that $\Int(T_n(D))$ is a ring. But, $T_n(D)$ does not satisfy the condition of Theorem \ref{Old characterization}, so $T_n(D)$ is not $\Int_K$-decomposable.

\item According to Proposition \ref{Int-decomp properties}, Frisch's result (\ref{IntMnD1}) can be restated as follows: 
$$\Int(M_n(D))=\Int_K(M_n(D))\otimes_D M_n(D).$$
\end{itemize}
\end{Rem}

When $D$ is a Dedekind domain, we can prove that the map $\Phi$ in Definition \ref{Int-decomp def} is always injective.

\begin{Lem}\label{Phi injective}
Let $D$ be a Dedekind domain and $A$ a $D$-algebra of finite type with standard assumptions. Then, $\Phi:\Int_K(A)\otimes_D A\to \Int(A)$ is injective.
\end{Lem}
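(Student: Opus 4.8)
The plan is to exploit the flatness of torsion-free modules over a Dedekind domain. Since $D$ is Dedekind and $A$ is torsion-free, $A$ is a flat $D$-module; equivalently, $A$ is locally free. The tensor functor $\Int_K(A) \otimes_D (-)$ applied to the inclusion $A \hookrightarrow B$ therefore yields an injection $\Int_K(A) \otimes_D A \hookrightarrow \Int_K(A) \otimes_D B$, because flatness of $A$ means tensoring preserves the injection $A \hookrightarrow B = K \otimes_D A$ (here I tensor on the other factor, using that $\Int_K(A) \subseteq K[X]$ is also torsion-free, hence flat, over $D$). The strategy is then to factor $\Phi$ through this larger, more transparent object and check injectivity there.

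First I would identify the target of the enlarged map. Since $B = \bigoplus$ over a $K$-basis locally, and $\Int_K(A) \subseteq K[X]$, the module $\Int_K(A) \otimes_D B$ maps naturally into $K[X] \otimes_K B \cong B[X]$, with an elementary tensor $f(X) \otimes b$ going to $f(X)\cdot b$. The composite of $\Phi$ with the inclusion $\Int(A) \hookrightarrow B[X]$ agrees with this map restricted to $\Int_K(A) \otimes_D A$. So it suffices to show that the natural map $\Psi : \Int_K(A) \otimes_D B \to B[X]$, sending $f \otimes b \mapsto f\cdot b$, is injective, and then invoke the commutative square to conclude that $\Phi$ is injective as well.

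The injectivity of $\Psi$ is the technical heart. Here I would use that $\Int_K(A) \otimes_D B \cong \Int_K(A) \otimes_D (K \otimes_D A) \cong (K \otimes_D \Int_K(A)) \otimes_K B$, and that $K \otimes_D \Int_K(A)$ sits inside $K[X]$. Choosing a $K$-basis $\{e_j\}$ of $B$, every element of $\Int_K(A)\otimes_D B$ can be written uniquely as a finite sum $\sum_j g_j \otimes e_j$ with $g_j \in K \otimes_D \Int_K(A) \subseteq K[X]$, and $\Psi$ sends it to $\sum_j g_j(X)\, e_j \in B[X]$. Since the $e_j$ are $K$-linearly independent and the $g_j$ have coefficients in $K$ (which is central in $B$), the image $\sum_j g_j(X) e_j$ vanishes in $B[X]$ only if each $g_j = 0$; comparing coefficients of each power of $X$ reduces this to the $K$-linear independence of the $e_j$. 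Hence $\Psi$ is injective.

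The main obstacle I anticipate is handling the passage from $D$-tensor to $K$-tensor cleanly: one must verify that $K \otimes_D \Int_K(A)$ genuinely embeds in $K[X]$ (true since $\Int_K(A)$ is torsion-free), and that the identification $\Int_K(A)\otimes_D B \cong (K\otimes_D \Int_K(A))\otimes_K B$ is compatible with the evaluation-style map into $B[X]$. The flatness step is standard once $A$ is known to be flat, which is exactly where the Dedekind hypothesis enters (torsion-free $\Leftrightarrow$ flat over a Dedekind domain). I would therefore organize the argument as: (i) $A$ flat over $D$; (ii) $\Phi$ factors as $\Int_K(A)\otimes_D A \hookrightarrow \Int_K(A)\otimes_D B \xrightarrow{\Psi} B[X]$, with the first arrow injective by flatness; (iii) $\Psi$ injective by the basis/coefficient comparison above; hence $\Phi$ is injective.
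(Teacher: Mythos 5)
Your proof is correct, but it runs the paper's argument with the roles of the two tensor factors swapped, so the comparison is worth spelling out. The paper tensors the inclusion $\Int_K(A)\subseteq K[X]$ with $A$, which is flat because it is a finitely generated torsion-free module over the Dedekind domain $D$ (hence projective); this gives $\Int_K(A)\otimes_D A\hookrightarrow K[X]\otimes_D A$, and the latter is canonically isomorphic to $B[X]$ via $\Psi(f\otimes a)=f\cdot a$, so $\Phi$ is simply the restriction of an isomorphism and no further work is needed. You instead tensor the inclusion $A\hookrightarrow B$ with $\Int_K(A)$, whose flatness follows from its being torsion-free as a subring of $K[X]$; this lands you in $\Int_K(A)\otimes_D B$, which---unlike $K[X]\otimes_D A$---is not all of $B[X]$, so you must then prove injectivity of the induced map into $B[X]$ by hand, via the base change $\Int_K(A)\otimes_D B\cong (K\otimes_D\Int_K(A))\otimes_K B$ and a coefficient comparison over a $K$-basis of $B$. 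Both routes rest on the same Dedekind fact (torsion-free $\Rightarrow$ flat), but the paper's choice of which factor to enlarge makes the second step a canonical isomorphism, whereas your choice costs a basis argument; in exchange, the flatness you invoke is automatic for $\Int_K(A)$ and does not use the standard assumptions on $A$ beyond injectivity of $A\to B$. Two small points to tighten: (a) since $\Int_K(A)$ is \emph{not} finitely generated over $D$, its flatness requires the general fact that arbitrary torsion-free modules over a Dedekind (indeed Pr\"ufer) domain are flat---e.g.\ because flatness passes to direct limits of the finitely generated, hence projective, submodules---which is a slightly stronger statement than the one the paper uses for $A$; (b) your phrase ``because flatness of $A$ means tensoring preserves the injection $A\hookrightarrow B$'' is misstated---what is needed there, and what your parenthetical correctly supplies, is flatness of $\Int_K(A)$, since that is the factor held fixed while $A\hookrightarrow B$ is tensored.
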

\begin{proof}
Define $\Psi: K[X] \otimes_D A \to B[X]$ by $\Psi(f(X) \otimes_D a) = f(X) a$. Then, $\Psi$ is an isomorphism of $K$-algebras. Since $D$ is Dedekind and $A$ is torsion-free, it follows that $A$ is a projective $D$-module, hence flat. Therefore, the containment $\Int_K(A)\subseteq K[X]$ implies that $\Int_K(A)\otimes_D A\subseteq K[X]\otimes_D A$. Thus, the map $\Phi$ is the restriction of $\Psi$ to $\Int_K(A)\otimes_D A$, and since $\Psi$ is injective so is $\Phi$. 
\end{proof}

%Since every element of the isomorphic copy of $\Int_K(A)\otimes_D A$ inside $B[X]$ is integer-valued on $A$, it follows that $\Int_K(A)\otimes_D A$ is mapped injectively via $\Phi$ into $\Int(A)$. 

%We remark that by \cite[Rem. II.3.7, iii)]{CaCh}, $\Int(D)$ is a projective module, hence free, since it is not finitely generated. Since $D$ is hereditary \cite[p. 42]{Lam2}, and $\Int_K(A)\subseteq \Int(D)$, it follows that $\Int_K(A)$ is a projective $D$-module too, hence flat. Hence, $\Int_K(A)\otimes_D D\cong\Int_K(A)\subseteq\Int_K(A)\otimes_D A$. 

%Finally, the following diagram is easily seen to be commutative:
%\begin{equation*}
%\xymatrix{
%\Int_K(A)\ar[r]\ar[rd]&\Int_K(A)\otimes_D A\ar[d]^{\Phi}\\
%&\Int(A)
%}
%\end{equation*}
%so $\Phi$ is injective.

In this way, for $D$ Dedekind, we may identify $\Int_K(A)\otimes_D A$ with the subring of $B[X]$ generated by $\Int_K(A)$ and $A$. We use this fact in proving the next proposition.

\begin{Prop}\label{IntKA=IntD implies A Int-decomposable}
Let $D$ be a Dedekind domain and $A$ a $D$-algebra of finite type with standard assumptions. If $\Int_K(A)=\Int(D)$ then $A$ is $\Int_K$-decomposable.
\end{Prop}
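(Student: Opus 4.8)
The plan is to show that the map $\Phi$ from Definition \ref{Int-decomp def} is an isomorphism. Since $D$ is Dedekind and $A$ is of finite type with standard assumptions, Lemma \ref{Phi injective} already gives that $\Phi$ is injective, so the whole task reduces to proving that $\Phi$ is surjective. By Proposition \ref{Int-decomp properties}(2), this is equivalent to showing that every $g \in \Int(A)$ lies in the subring of $B[X]$ generated by $\Int_K(A)$ and $A$; concretely, I aim to write each such $g$ as a finite sum $\sum_j g_j a_j$ with $g_j \in \Int_K(A)$ and $a_j \in A$.

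The key structural input is that, over the Dedekind domain $D$, the finite type torsion-free module $A$ is projective (as already used in the proof of Lemma \ref{Phi injective}). Projectivity supplies a dual basis: finitely many elements $a_1, \dots, a_m \in A$ and $D$-linear maps $\phi_1, \dots, \phi_m \colon A \to D$ with $a = \sum_j \phi_j(a)\, a_j$ for every $a \in A$. Extending scalars to $K$, each $\phi_j$ extends to a $K$-linear functional $\widetilde{\phi_j} \colon B \to K$ that restricts to $\phi_j$ on $A$, and the reconstruction identity $b = \sum_j \widetilde{\phi_j}(b)\, a_j$ holds for all $b \in B$ because it holds on the $K$-spanning set $A$.

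With this in hand, I would apply each $\widetilde{\phi_j}$ coefficientwise to $g = \sum_k c_k X^k \in \Int(A)$, obtaining $g_j := \sum_k \widetilde{\phi_j}(c_k) X^k \in K[X]$. Since $X$ is central and the coefficients of $g_j$ lie in $K$, the reconstruction identity applied to each $c_k$ gives $g = \sum_j g_j a_j$ in $B[X]$. It then remains to check that each $g_j$ belongs to $\Int_K(A)$, and this is where the hypothesis enters. For $d \in D$, the $K$-linearity of $\widetilde{\phi_j}$ yields $g_j(d) = \widetilde{\phi_j}(g(d))$, and since $g \in \Int(A)$ forces $g(d) \in A$, this equals $\phi_j(g(d)) \in D$. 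Thus $g_j \in \Int(D)$, and the assumption $\Int_K(A) = \Int(D)$ gives $g_j \in \Int_K(A)$, completing the expression of $g$ as an element of the image of $\Phi$.

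The point requiring the most care — and the reason a one-line ``compare coordinates in a basis'' argument does not suffice — is that $A$ need not be free over $D$, so there is no single $D$-basis of $A$ that is simultaneously a $K$-basis of $B$ in which coordinates of elements of $A$ are automatically in $D$. The dual basis replaces a global basis by a projective ``partition of unity,'' and the only genuinely non-formal step is verifying $g_j(d) = \phi_j(g(d)) \in D$, which hinges on evaluating at the central element $d$ (so that the coefficientwise functional commutes with evaluation) and on the fact that $\widetilde{\phi_j}$ restricts to $\phi_j \colon A \to D$ on $A$. As an alternative, one could instead localize at each maximal ideal $P$ of $D$, where $A_P$ is free over the DVR $D_P$ and the claim follows from comparing coordinates in a $D_P$-basis containing $1$; but the dual basis argument avoids having to track the behavior of $\Int$ under localization.
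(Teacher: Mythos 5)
Your proof is correct, and it shares the paper's overall skeleton: injectivity of $\Phi$ comes from Lemma \ref{Phi injective}, and surjectivity exploits the fact that, once $\Int_K(A)=\Int(D)$, only evaluation at elements of $D$ (rather than at all of $A$) needs to be controlled. The difference lies in how that surjectivity step is discharged. The paper introduces the module $\Int(D,A)=\{f\in B[X] \mid f(D)\subseteq A\}$, observes $\Int(A)\subseteq\Int(D,A)$, and then cites \cite[Prop. IV.3.3]{CaCh} for the identification $\Int(D,A)=\Int(D)\otimes_D A$, which finishes the argument in two lines. You never name $\Int(D,A)$, but your argument proves exactly the containment $\Int(D,A)\subseteq\Int(D)\otimes_D A$ by hand: the dual basis $\{a_j,\phi_j\}$ of the finitely generated projective $D$-module $A$ splits any $g=\sum_k c_kX^k$ with $g(D)\subseteq A$ into components $g_j=\sum_k\widetilde{\phi_j}(c_k)X^k$, and the identity $g_j(d)=\widetilde{\phi_j}(g(d))=\phi_j(g(d))\in D$ (valid because $d\in D$ is central, $\widetilde{\phi_j}$ is $K$-linear, and $\widetilde{\phi_j}$ restricts to $\phi_j$ on $A$) places each $g_j$ in $\Int(D)=\Int_K(A)$. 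So you have in effect reproved the cited result of Cahen--Chabert in the case needed here. What each approach buys: the paper's proof is shorter at the cost of an external citation, while yours is self-contained and makes visible that the only structural input is projectivity of $A$ --- the same input that drives Lemma \ref{Phi injective} --- with finite generation guaranteeing that the dual basis, hence the sum $\sum_j g_j a_j$, is finite.
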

\begin{proof}
Since $D$ is Dedekind, $\Int(A)$ contains $\Int_K(A) \otimes_D A$ via the map $\Phi$ by Lemma \ref{Phi injective}. For the other containment, assume $\Int_K(A) = \Int(D)$ and let $\Int(D, A)$ be the set $\Int(D, A) := \{f \in B[X] \mid f(D) \subseteq A\}$. Clearly, $\Int(A) \subseteq \Int(D, A)$. By \cite[Prop. IV.3.3]{CaCh} we have $\Int(D,A) = \Int(D)\otimes_D A$, so that
\begin{equation*}
\Int(A) \subseteq \Int(D, A) = \Int(D)\otimes_D A = \Int_K(A) \otimes_D A,
\end{equation*}
as required. 
\end{proof}

We can also generalize \cite[Thm. 3.3]{Wer} and prove that $\Int_K$-decomposability is a local property when $A$ is finitely generated. Note that this will hold without the assumption that $D$ is Dedekind.

\begin{Prop}\label{IntKdeclocal}
Let $D$ be a domain and let $A$ be a $D$-algebra of finite type with standard assumptions. Then $A$ is $\Int_K$-decomposable if and only if $A_P$ is $\Int_K$-decomposable for each prime $P$ of $D$.
\end{Prop}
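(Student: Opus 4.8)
The plan is to prove the biconditional by showing that $\Int_K$-decomposability commutes with localization, reducing the global statement to the collection of local ones. The central fact I would establish first is that localization commutes with the relevant constructions: for a prime $P$ of $D$, one has $\Int_K(A)_P = \Int_K(A_P)$ and $\Int(A)_P = \Int(A_P)$. These are localization-of-integer-valued-polynomial statements that rely on $A$ being of finite type (so that the finitely many coefficient denominators can be cleared simultaneously); this is the generalization of \cite[Thm. 3.3]{Wer} referenced in the statement, and I would prove it by a direct denominator-clearing argument. Given $f \in \Int(A_P)$, its coefficients lie in $B$ and map $A_P$ into $A_P$; since $A$ is finitely generated, one can find $s \in D \setminus P$ with $sf \in \Int(A)$, showing $f \in \Int(A)_P$, and the reverse containment is immediate. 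The same argument works for $\Int_K$.

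Next I would observe that the map $\Phi$ from Definition \ref{Int-decomp def} is natural with respect to localization. Localizing $\Phi$ at $P$ yields a map $\Phi_P : (\Int_K(A) \otimes_D A)_P \to \Int(A)_P$. Using that localization is exact and commutes with tensor products over $D$, I have $(\Int_K(A) \otimes_D A)_P \cong \Int_K(A)_P \otimes_{D_P} A_P \cong \Int_K(A_P) \otimes_{D_P} A_P$, where the last isomorphism uses the localization identities from the first step. Under these identifications, $\Phi_P$ becomes exactly the map $\Phi^{(P)} : \Int_K(A_P) \otimes_{D_P} A_P \to \Int(A_P)$ attached to the $D_P$-algebra $A_P$. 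So the question reduces to a statement about when a $D$-module homomorphism is an isomorphism.

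The final step is the standard local-global principle for module homomorphisms: a homomorphism of $D$-modules is an isomorphism if and only if its localization at every prime $P$ is an isomorphism. Applying this to $\Phi$, I conclude that $\Phi$ is an isomorphism precisely when each $\Phi_P$ is, which by the identification above means precisely when each $\Phi^{(P)}$ is an isomorphism, i.e.\ when each $A_P$ is $\Int_K$-decomposable. This yields the desired equivalence. One should check that $A_P$ satisfies the standard assumptions and is of finite type as a $D_P$-algebra so that the notion of $\Int_K$-decomposability applies to it: torsion-freeness and finite generation pass to localizations, and $A_P \cap K = D_P$ follows from $A \cap K = D$.

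The main obstacle I anticipate is establishing the localization identities $\Int(A)_P = \Int(A_P)$ and $\Int_K(A)_P = \Int_K(A_P)$ cleanly, since this is where the finite-type hypothesis is essential and where one must be careful that evaluation of noncommutative polynomials behaves well under localization. The inclusion $\Int(A)_P \subseteq \Int(A_P)$ is routine, but the reverse inclusion requires clearing denominators uniformly across a finite generating set of $A$, and I would want to verify that the resulting element indeed lands in $\Int(A)$ rather than merely mapping a finite generating set correctly. The remaining steps—naturality of $\Phi$, flatness/exactness of localization, and the local-global principle for isomorphisms—are formal once the identities are in hand.
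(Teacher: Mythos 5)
Your reduction is exactly the paper's: identify $\Int_K$-decomposability of $A$ with the map $\Phi$ of Definition \ref{Int-decomp def} being an isomorphism, use exactness of localization and the standard tensor identities together with the equalities $\Int_K(A)_P = \Int_K(A_P)$ and $\Int(A)_P = \Int(A_P)$ to recognize the localized map $\Phi_P$ as the structure map attached to the $D_P$-algebra $A_P$, and finish with the local-global principle for module homomorphisms (the paper cites \cite[Prop. 3.9]{AtMc}). The one point of divergence is that the paper simply cites \cite[Prop. 3.2]{Wer} for the two localization equalities, whereas you propose to prove them inline, and your sketch of that lemma is where the genuine gaps lie.

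First, you call the containment $\Int(A)_P \subseteq \Int(A_P)$ ``immediate''/``routine.'' It is not; it is arguably where most of the content sits. It asserts that a polynomial with $f(A) \subseteq A$ automatically satisfies $f(A_P) \subseteq A_P$, i.e.\ that integer-valuedness persists on the strictly larger ring $A_P$. Writing $f = g/d$ with $g \in A[X]$ and $d \in D$, the issue is not whether $g(a/s)$ lies in $A_P$ (trivial, since $A_P$ is a ring) but whether it lies in $dA_P$; the standard argument reduces modulo $d$ and needs every element of $A_P/dA_P \cong (A/dA)_P$ to be represented by an element of $A/dA$, which is automatic when $A/dA$ is finite, and is a theorem of Cahen--Chabert for Noetherian $D$ in the case $A=D$ \cite{CaCh}, but is certainly not formal. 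Second, in the direction you do argue, $\Int(A_P) \subseteq \Int(A)_P$, you must produce a single $s \in D \setminus P$ with $s f(a) \in A$ for \emph{every} $a \in A$. Since evaluation is not $D$-linear ($f(da) \neq d\,f(a)$ in general), clearing denominators on a finite generating set of $A$ proves nothing about the remaining values; what is actually needed is that the $D$-submodule of $\tfrac{1}{d}A/A \cong A/dA$ generated by the image of $f(A)$ be killed by one $s \notin P$. That holds if this submodule is finitely generated (e.g.\ if $D$ is Noetherian) or if $A/dA$ is finite, but it does not follow from finite generation of $A$ alone, which is all you invoke; you flag this worry yourself but do not resolve it. In summary, the skeleton of your proof is the paper's own and is sound; to make it complete you should either cite \cite[Prop. 3.2]{Wer} for the localization identities, as the paper does, or supply the mod-$d$ and uniform-denominator arguments above, which genuinely use more than the bare finite-type hypothesis.
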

\begin{proof}
By definition, $A$ is $\Int_K$-decomposable if and only if the map $\Phi$ in (\ref{Phi}) is an isomorphism. By \cite[Prop. 3.9]{AtMc}, this holds if and only  the $D$-modules $\Int(A)$ and $\Int_K(A)\otimes_D A$ are isomorphic locally at each prime ideal $P$ of $D$, that is, the induced maps
\begin{equation*}
\Phi_P:(\Int_K(A) \otimes_D A) \otimes_D D_P \to \Int(A)\otimes_D D_P
\end{equation*}
are isomorphisms for each prime $P$ of $D$. 

Recall that for a $D$-module $M$ and a multiplicative set $S \subset D$, we have $S^{-1} M \cong M \otimes_D S^{-1} D$. Thus, we always have $A_P \cong A \otimes_D D_P$ and $\Int_K(A)_P \cong \Int_K(A) \otimes_D D_P$. Since $A$ is finitely generated, by \cite[Prop. 3.2]{Wer} we have $\Int_K(A)_P = \Int_K(A_P)$ and $\Int(A_P) = \Int(A)_P$. Hence, $\Int_K(A) \otimes_D D_P \cong \Int_K(A_P)$ and $\Int(A) \otimes_D D_P \cong \Int(A_P)$.

Using this and other standard properties of tensor products (as in \cite[Chap. II, \S 5]{BourbakiAlg}), we have
\begin{align*}
(\Int_K(A) \otimes_D A) \otimes_D D_P &\cong \Int_K(A) \otimes_D (A\otimes_D D_P)\\
&\cong \Int_K(A)\otimes_D A_P\\
&\cong (\Int_K(A)\otimes_D D_P) \otimes_{D_P} A_P\\
&\cong \Int_K(A_P)\otimes_{D_P} A_P.
\end{align*}
Hence, the induced map $\Phi_P$ is an isomorphism if and only if
\begin{equation*}
\Int(A_P) \cong (\Int_K(A) \otimes_D A) \otimes_D D_P \cong \Int_K(A_P)\otimes_{D_P} A_P,
\end{equation*}
which means that $A_P$ is $\Int_K$-decomposable.
\end{proof}

We can now extend the classification of $\Int$-decomposable algebras given in Theorem \ref{Old characterization}.

\begin{Thm}\label{Int-decomp classification}
Let $D$ be a Dedekind domain with finite residue rings. Let $A$ be a $D$-algebra of finite type with standard assumptions. Then, $A$ is $\Int_K$-decomposable if and only if for each nonzero prime $P$ of $D$, there exist $n, t \in \N$ and a finite field $\F_q$ such that $A/PA \cong \bigoplus_{i=1}^t M_n(\F_q)$.

In particular, if $A$ is commutative, then $A$ is $\Int_K$-decomposable if and only if for each $P$ there exists a finite field $\F_q$ such that $A/PA \cong \bigoplus_{i=1}^t \F_q$ for some $t \in \N$.
\end{Thm}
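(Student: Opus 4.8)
The plan is to reduce the global statement to a local one and then invoke the already-established theorem for free algebras. By Proposition \ref{IntKdeclocal}, the algebra $A$ is $\Int_K$-decomposable if and only if the localization $A_P$ is $\Int_K$-decomposable for every nonzero prime $P$ of $D$. Likewise, the residue-ring condition $A/PA \cong \bigoplus_{i=1}^t M_n(\F_q)$ is intrinsically local: since $A_P/PA_P \cong A/PA$ (because $P$ is the unique maximal ideal hit under localization and $PA_P \cap A = PA$), the condition on $A/PA$ is equivalent to the same condition on $A_P/PA_P$. Thus it suffices to prove the theorem when $D$ is replaced by the localization $D_P$, which is a discrete valuation ring, and $A$ by $A_P$.

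Over the DVR $D_P$, the key simplification is that a finitely generated torsion-free module over a principal ideal domain is free. So $A_P$ is automatically a free $D_P$-module of finite rank (it is finitely generated by assumption and torsion-free by the standard assumptions), and it inherits the standard assumptions $A_P \cap K = D_P$. At this point the hypotheses of Theorem \ref{Old characterization} are met with $D_P$ in place of $D$: $D_P$ is a Dedekind domain (indeed a DVR) with finite residue ring $D_P/PD_P \cong D/P$, and $A_P$ is a free $D_P$-algebra of finite rank with standard assumptions. Applying Theorem \ref{Old characterization} to $A_P$, I obtain that $A_P$ is $\Int_K$-decomposable if and only if $A_P/PA_P \cong \bigoplus_{i=1}^t M_n(\F_q)$ for some $n, t, q$. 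Combining this equivalence with the two local reductions above yields the main statement.

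For the final sentence, when $A$ is commutative the residue ring $A/PA$ must itself be commutative, and a commutative matrix ring $M_n(\F_q)$ forces $n = 1$. Hence the condition $A/PA \cong \bigoplus_{i=1}^t M_n(\F_q)$ collapses to $A/PA \cong \bigoplus_{i=1}^t \F_q$, giving the stated specialization. The main obstacle I anticipate is verifying the compatibility of localization with the residue-ring condition and with the formation of $\Int_K(A)$ and $\Int(A)$ — that is, confirming $\Int_K(A)_P = \Int_K(A_P)$ and $\Int(A)_P = \Int(A_P)$ — but these identifications have already been supplied by \cite[Prop. 3.2]{Wer} and used in the proof of Proposition \ref{IntKdeclocal}, so the remaining work is essentially bookkeeping rather than a genuine new difficulty.
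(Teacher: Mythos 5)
Your proposal is correct and follows essentially the same route as the paper: localize via Proposition \ref{IntKdeclocal}, observe that $A_P$ is a free $D_P$-module of finite rank (the paper gets this from projectivity of finitely generated torsion-free modules over a Dedekind domain, you from freeness over the DVR $D_P$ — the same fact), apply Theorem \ref{Old characterization} to $A_P$, and glue using $A/PA \cong A_P/PA_P$. No gaps; the commutative specialization via $n=1$ is likewise how the paper intends it.
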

\begin{proof}
Since $D$ is Dedekind and $A$ is finitely generated and torsion-free, $A$ is a projective $D$-module \cite[Cor. p. 30]{Nark}. Hence, for each prime $P$, $A_P$ is free as a $D_P$-module, and $A_P$ has finite rank because $A$ is finitely generated. Applying Theorem \ref{Old characterization} to $A_P$, we see that $A_P$ is $\Int_K$-decomposable if and only if $A_P/PA_P \cong \bigoplus_i M_n(\F_q)$ for some $n$ and some $\F_q$. Using Proposition \ref{IntKdeclocal} and the fact that $A/PA \cong A_P/PA_P$, we obtain the stated theorem.
\end{proof}

Under the same hypotheses on $D$, we can describe those $A$ for which $\Int_K(A) = \Int(D)$. This generalizes \cite[Thm. 4.6]{Wer}, which dealt with the case where $A$ was free.

\begin{Thm}\label{When Int_K(A) = Int(D)}
Let $D$ be a Dedekind domain with finite residue rings. Let $A$ be a $D$-algebra of finite type with standard assumptions. Then, $\Int_K(A) = \Int(D)$ if and only if for each nonzero prime $P$ of $D$, $A/PA \cong \bigoplus_{i=1}^t D/P$ for some $t \in \N$.
\end{Thm}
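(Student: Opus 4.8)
The plan is to reduce to the case where $D$ is a discrete valuation ring and then read off the structure of the residue rings of $A$ from a single ``Fermat'' polynomial in one direction and from an idempotent‑lifting argument in the other. First I would localize. Since $A$ satisfies the standard assumptions we always have $\Int_K(A)\subseteq\Int(D)$, and both are $D$-submodules of $K[X]$; equality of submodules is a local property, so $\Int_K(A)=\Int(D)$ holds if and only if $\Int_K(A)_P=\Int(D)_P$ for every nonzero prime $P$. By \cite[Prop. 3.2]{Wer} (taking $A=D$ for the second identity) one has $\Int_K(A)_P=\Int_K(A_P)$ and $\Int(D)_P=\Int(D_P)$, while $A_P/PA_P\cong A/PA$ and $D_P/PD_P\cong D/P$. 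Hence it suffices to prove the stated equivalence when $D=D_P$ is a DVR with maximal ideal $P=\pi D$ and finite residue field $\F_q=D/P$; in that situation $A$ is free of finite rank $t$ over $D$ and $A/PA$ is a finite $\F_q$-algebra.

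For the forward implication I would assume $\Int_K(A)=\Int(D)$. Because $x^q\equiv x\pmod P$ for every $x\in D$, the polynomial $(X^q-X)/\pi$ lies in $\Int(D)=\Int_K(A)$, and evaluating it on $A$ shows that $a^q\equiv a\pmod{\pi A}$ for all $a\in A$; that is, every element of the finite ring $\bar A:=A/PA$ satisfies $\bar x^{\,q}=\bar x$. Proposition \ref{IntKA=IntD implies A Int-decomposable} together with Theorem \ref{Int-decomp classification} gives $\bar A\cong\bigoplus_{i=1}^{t}M_n(\F_{q'})$ for some $n$ and some finite field $\F_{q'}\supseteq\F_q$, and the identity $\bar x^{\,q}=\bar x$ then forbids nonzero nilpotents (so $n=1$, since $E_{12}^{\,q}=0\neq E_{12}$ when $n\geq 2$) and forces every element of $\F_{q'}$ to be a root of $X^q-X$ (so $\F_{q'}=\F_q=D/P$). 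Thus $A/PA\cong(D/P)^t$. Alternatively one may bypass the classification entirely: a finite ring in which $x^q=x$ holds identically is commutative by Jacobson's theorem and is reduced, hence a product of finite fields each of which equals $\F_q$.

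For the converse I would assume $A/PA\cong(D/P)^t$ and first promote this to $A/P^kA\cong(D/P^k)^t$ for every $k\geq 1$. Over the Artinian local base $D/P^k$ the ideal $PA/P^kA$ is nilpotent, so the $t$ orthogonal idempotents of $A/PA\cong\F_q^{\,t}$ lift to orthogonal idempotents $e_1,\dots,e_t$ of $A/P^kA$. For $i\neq j$ the Peirce component $e_i(A/P^kA)e_j$ reduces to $\bar e_i\bar A\bar e_j=0$ modulo $P$ and is a direct summand of a free $D/P^k$-module, so Nakayama forces it to vanish; consequently each $e_i$ is central, $A/P^kA\cong\prod_i e_i(A/P^kA)e_i$, and each factor is free of rank one over $D/P^k$, i.e.\ $\cong D/P^k$. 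To finish I would show $\Int(D)\subseteq\Int_K(A)$: given $f\in\Int(D)$, clear denominators as $f=g/d$ with $g\in D[X]$ and $v_P(d)=k$, so that $g(c)\in dD$ for all $c\in D$. For $a\in A$, reducing modulo $dA=P^kA$ and using the $D$-algebra isomorphism $A/dA\cong(D/dD)^t$, the element $a$ maps to a tuple $(a_1,\dots,a_t)$ and $g(a)$ maps to $(g(a_1),\dots,g(a_t))=0$; hence $g(a)\in dA$ and $f(a)=g(a)/d\in A$.

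The main obstacle I anticipate is the converse, specifically controlling a possibly noncommutative $A$ when passing from $A/PA$ to the higher quotients $A/P^kA$. The crucial point is that the vanishing of the off-diagonal Peirce components (via Nakayama) simultaneously yields the centrality of the lifted idempotents, the commutativity of $A/P^kA$, and the rank‑one splitting, and it is precisely this rank‑one splitting that makes the denominator‑clearing computation valid. I would also take care to verify that the isomorphism $A/P^kA\cong(D/P^k)^t$ is one of $D$-algebras, so that the evaluation of $g\in D[X]$ really is computed componentwise.
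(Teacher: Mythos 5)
Your proof is correct, but after the localization step it diverges substantially from the paper's. The reduction to a DVR is identical in both: you and the paper both invoke \cite[Props. 3.1, 3.2]{Wer} to get $\Int_K(A)_P=\Int_K(A_P)$, $\Int(D)_P=\Int(D_P)$, and the local-global principle for equality of the two submodules of $K[X]$. From that point on, however, the paper disposes of the local case in one line by citing \cite[Thm. 4.6]{Wer} (the free-algebra version of this statement), since $A_P$ is free of finite rank over $D_P$; you instead reprove the local equivalence from scratch. Your forward direction, via the polynomial $(X^q-X)/\pi\in\Int(D)$ forcing $\bar x^{\,q}=\bar x$ on $A/PA$, then Jacobson's commutativity theorem plus reducedness, is a genuinely self-contained and elementary argument (the alternative path you mention, through Proposition \ref{IntKA=IntD implies A Int-decomposable} and Theorem \ref{Int-decomp classification}, is also legitimate and non-circular, since both precede this theorem, but it secretly re-imports \cite[Thm. 6.1]{Wer}). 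Your converse --- lifting the complete system of orthogonal idempotents of $A/PA\cong\F_q^t$ modulo the nilpotent ideal $PA/P^kA$, killing the off-diagonal Peirce components by Nakayama (or simply by nilpotence of $P$ in $D/P^k$), obtaining $A/P^kA\cong(D/P^k)^t$ as $D$-algebras, and then clearing denominators --- in effect reproves the $n=1$ case of \cite[Thm. 4.6]{Wer} rather than quoting it. What each approach buys: the paper's proof is three lines and consistent with its overall strategy of bootstrapping the finite-type theory from the free case; yours is longer but self-contained, requiring nothing from \cite{Wer} beyond the localization lemmas, and it exhibits concretely why the condition $A/PA\cong(D/P)^t$ propagates to all powers $P^k$. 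Two small points you should make explicit if you write this up: the lifted idempotents again sum to $1$ (because $1-\sum_i e_i$ is an idempotent lying in a nil ideal, hence zero), which is what licenses the full Peirce decomposition; and each diagonal component $e_i(A/P^kA)e_i$ is free of rank one because it is a finitely generated projective module over the local ring $D/P^k$ whose reduction modulo $P$ is one-dimensional.
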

\begin{proof}
We know that $\Int(D) = \bigcap_P \Int(D)_P$ (where the intersection is over nonzero primes $P$ of $D$) and that $\Int(D)_P = \Int(D_P)$ for each $P$. The analogous equalities for $\Int_K(A)$ are shown in \cite[Props. 3.1, 3.2]{Wer}. Thus, $\Int_K(A) = \Int(D)$ if and only if $\Int_K(A_P) = \Int(D_P)$ for each $P$. But, as in Theorem \ref{Int-decomp classification}, each $A_P$ is a free $D_P$-module of finite rank. By \cite[Thm. 4.6]{Wer}, $\Int_K(A_P) = \Int(D_P)$ if and only if $A_P / PA_P \cong \bigoplus_i D_P/PD_P$. The result now follows because $A_P/PA_P \cong A/PA$ and $D_P / PD_P \cong D/P$.
\end{proof}

%%%%%%%%%%%%%%%%%%%
%%%%%%%%%%%%%%%%%%%
%%%%%%%%%%%%%%%%%%%
\section{Int-decomposable Algebras via Completions}\label{Int-decomp Completion Section}
In this section, we provide an alternate characterization of Int-decomposable algebras. As in Section \ref{Int-decomp Sec}, we assume that $D$ is a Dedekind domain with finite residue fields and that $A$ is a $D$-algebra of finite type with standard assumptions. Theorem \ref{Int-decomp classification} asserts that $A$ is $\Int_K$-decomposable precisely when for each nonzero prime $P$ of $D$, $A/PA$ is isomorphic to a direct sum of copies of a matrix ring with entries in a finite field. Instead of focusing on $A/PA$, we can work with the $P$-adic completion $\whA_P=\varprojlim A/P^k A$ of $A$, which in this case is isomorphic to $\whD_P \otimes_D A$ (where $\whD_P$ is the $P$-adic completion of $D$) because $A$ is of finite type. In Theorems \ref{Int_K-decomposability Theorem} and \ref{Int_K(A) = Int(D) completions}, we prove that both Int-decomposability and the equality $\Int_K(A) = \Int(D)$ can be characterized in terms of the completions $\whA_P$. These results complement Theorems \ref{Int-decomp classification} and \ref{When Int_K(A) = Int(D)}, respectively.

Recall first the following definition.

\begin{Def}\label{Null ideal}
For a ring $R$ and an $R$-algebra $A$, the \textit{null ideal} of $A$ with respect to $R$, denoted $N_R(A)$, is the set of polynomials in $R[X]$ that kill $A$. That is, $N_R(A) = \{ f \in R[X] \mid f(A) = 0\}$. When $R$ is commutative, $N_R(A)$ is easily seen to be an ideal of $R[X]$. If $R=A$, we set $N_R(A)=N(A)$.
\end{Def}

Null ideals are a useful tool for dealing with integer-valued polynomials because there is a correspondence between the elements of $\Int_K(A)$ and the null ideals $N_{D/dD}(A/dA)$, where $d \in D$. Specifically, let $f(X) = g(X)/d \in K[X]$, where $g(X) \in D[X]$ and $d \in D$. Then, $f \in \Int_K(A)$ if and only if the residue of $g$ in $(D/dD)[X]$ is in $N_{D/dD}(A/dA)$. 

Int-decomposability can be expressed in terms of null ideals (this was the main strategy employed in \cite{Wer}; see \cite[Def. 4.3, Thm. 4.4]{Wer}).  To do this, we need a notion of ``decomposability'' for $N(A/P^k A)$. This is accomplished in the next definition, which is the analog of Definition \ref{Int-decomp def}.

\begin{Def}\label{Null decomposability}
Let $P$ be a nonzero prime of $D$ and let $k > 0$. We say that $A/P^{k}A$ is \textit{$N_{D/P^k}$-decomposable} if the canonical ring isomorphism $(D/P^k)[X]\otimes_{D/P^k}A/P^kA \cong (A/P^k A)[X]$ that maps each elementary tensor product $f(X) \otimes a$ to $f(X) \cdot a$ induces the following isomorphism of $D/P^k$ modules:
\begin{equation*}
N_{D/P^k}(A/P^k A)\otimes_{D/P^k}A/P^k A \cong N(A/P^k A).
\end{equation*}
\end{Def}

\begin{Lem}\label{Decomposability equivalence}
Let $D$ be a Dedekind domain with finite residue rings. Let $A$ be a $D$-algebra of finite type with standard assumptions. Then, $A$ is $\Int_K$-decomposable if and only if $A/P^kA$ is $N_{D/P^k}$-decomposable for each nonzero prime $P$ of $D$ and each $k > 0$.
\end{Lem}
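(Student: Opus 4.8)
The plan is to reduce $\Int_K$-decomposability to a statement about null ideals of the finite quotients $A/P^kA$, using two tools that are already available: the local characterization of $\Int_K$-decomposability (Proposition \ref{IntKdeclocal}), and the correspondence between elements of $\Int_K(A)$ and null ideals $N_{D/dD}(A/dA)$ described just before Definition \ref{Null decomposability}. Since the goal is an ``if and only if'' ranging over all primes $P$ and all $k>0$, the natural first move is to localize: by Proposition \ref{IntKdeclocal}, $A$ is $\Int_K$-decomposable if and only if $A_P$ is $\Int_K$-decomposable for every prime $P$, and since $A/P^kA \cong A_P/P^kA_P$, the $N_{D/P^k}$-decomposability condition is also purely local at $P$. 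So it suffices to fix a prime $P$ and prove that $A_P$ is $\Int_K$-decomposable if and only if $A_P/P^kA_P$ is $N_{D/P^k}$-decomposable for every $k>0$. This lets me work over the DVR $D_P$, where $A_P$ is free of finite rank (as in the proof of Theorem \ref{Int-decomp classification}), which simplifies the bookkeeping.

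\textbf{The heart of the argument} is to translate the isomorphism $\Phi:\Int_K(A_P)\otimes_{D_P}A_P \to \Int(A_P)$ into the corresponding statement ``one level at a time'' for each modulus $P^k$. Working with a fixed $D_P$-basis $\{\alpha_i\}$ of $A_P$, every $f\in B[X]$ is written uniquely as $f=\sum_i f_i\alpha_i$ with $f_i\in K[X]$; by Lemma \ref{Phi injective}, $A_P$ is $\Int_K$-decomposable exactly when $f\in\Int(A_P)$ forces each coefficient $f_i\in\Int_K(A_P)$. The plan is to clear denominators: given $f\in\Int(A_P)$, I would write $f = g/d$ with $g\in A_P[X]$ (abusing notation, $g\in D_P[X]$ after expanding in the basis) and $d=\pi^k$ a power of a uniformizer $\pi$ of $D_P$, so that membership $f\in\Int(A_P)$ is equivalent to the residue $\overline{g}\in N(A_P/P^kA_P)$, while membership of each coefficient $f_i\in\Int_K(A_P)$ is equivalent to $\overline{g_i}\in N_{D/P^k}(A_P/P^kA_P)$. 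The decomposition $\overline{g}=\sum_i\overline{g_i}\,\overline{\alpha_i}$ is precisely the image of $\sum_i \overline{g_i}\otimes\overline{\alpha_i}$ under the canonical map $N_{D/P^k}\otimes A/P^kA\to N(A/P^kA)$ of Definition \ref{Null decomposability}. Hence $\Phi$ is surjective at ``level $k$'' (every $f$ with denominator $\pi^k$ decomposes) if and only if the level-$k$ null-ideal map is surjective, and since these maps are already injective over the relevant coefficient rings, the two decomposability notions match modulus by modulus.

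\textbf{The main obstacle} I anticipate is handling the quantifier over $k$ cleanly, namely showing that $\Int_K$-decomposability of $A_P$ (a single global statement) is equivalent to $N_{D/P^k}$-decomposability holding for \emph{all} $k$ simultaneously, rather than for some single $k$. The forward direction is routine: if $A_P$ is $\Int_K$-decomposable, then every $f$ with denominator $\pi^k$ decomposes, which yields $N_{D/P^k}$-decomposability for that $k$. For the converse I must assemble the individual moduli: an arbitrary $f\in\Int(A_P)$ has \emph{some} denominator $\pi^k$, so if $N_{D/P^k}$-decomposability holds for that particular $k$, then $f$ decomposes; ranging over all $f$ forces each coefficient into $\Int_K(A_P)$ because $\Int_K(A_P)=\bigcup_k \{g/\pi^k : \overline{g}\in N_{D/P^k}(A_P/P^kA_P)\}$. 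So no genuine limit or completion argument is needed here—the union over $k$ absorbs the quantifier—but I would take care to verify that the canonical map in Definition \ref{Null decomposability} really is the level-$k$ reduction of $\Phi$ (compatibility of the two tensor-product structures under reduction mod $P^k$), since that compatibility is what makes the dictionary between the two decomposability statements exact.
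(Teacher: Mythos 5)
Your proposal is correct and follows essentially the same route as the paper: localize via Proposition \ref{IntKdeclocal} to reduce to the case of a DVR where $A_P$ is free, then match the basis decomposition of $\Int(A_P)$ against the null-ideal decomposition modulo $P^k$, level by level, and finally identify the basis formulation of $N_{D/P^k}$-decomposability with the tensor formulation of Definition \ref{Null decomposability} using freeness of $A/P^kA$ over $D/P^k$. The only difference is that where the paper simply cites \cite[Thm. 4.4, Def. 4.3]{Wer} for the equivalence between $\Int_K$-decomposability and $N(A/P^kA)=\bigoplus_i N_{D/P^k}(A/P^kA)\alpha_i$ for all $k>0$, you prove that dictionary directly by clearing denominators (the denominator of any $f\in\Int(A_P)$ being a power of the uniformizer absorbs the quantifier over $k$), which makes the argument self-contained.
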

\begin{proof}
By Proposition \ref{IntKdeclocal} we may localize at a prime $P$ and assume that $D$ is a discrete valuation ring, and hence that $A$ is free. Furthermore, each $A/P^k A$ is free as a $D/P^k$-module, so we can always find $\alpha_1, \ldots, \alpha_t \in A/P^k A$ such that $A/P^kA = \bigoplus_{i=1}^t D/P^k \alpha_i$. By Proposition \ref{Int-decomp properties} (3), $A$ is $\Int_K$-decomposable if and only if $A$ is $\Int_K$-decomposable in the sense of Definition \ref{Int-decomp 1 def}; and by \cite[Thm. 4.4, Def. 4.3]{Wer}, this is equivalent to having $N(A/P^kA) = \bigoplus_{i=1}^t N_{D/P^k}(A/P^kA)\alpha_i$ for each $k > 0$. Proceeding as in Proposition \ref{Int-decomp properties} (3), one may show this last condition is equivalent to having $N_{D/P^k}(A/P^k A)\otimes_{D/P^k}A/P^k A \cong N(A/P^k A)$.
\end{proof}

By Proposition \ref{IntKdeclocal}, $\Int_K$-decomposability is a local property. So, in this section we will often reduce to the local case, namely that $D$ is a discrete valuation ring (DVR) with finite residue field $\mathbb{F}_{q_0}$ and maximal ideal $P=\pi D$. Moreover, notice that when $P$ is a maximal ideal of $D$, we have $A/PA\cong A_P/PA_P$, so that 
\begin{equation*}
N_{D/P}(A/PA)=N_{D_P/PD_P}(A_P/PA_P).
\end{equation*}
We will use this fact freely in our subsequent work. In order to ease the notation, we set $A_k=A/P^kA$ and $D_k=D/P^k$, for each $k\in\N$. Note that $D_k\subseteq A_k$ and that $A_k$ is a torsion-free $D_k$-algebra, which is finitely generated as a $D_k$-module.

\begin{Lem}\label{decomposition Ak}(\cite[Thm. 5.10]{Wer})
Let $k \in \N$. Then, $A_k$ is $N_{D_k}$-decomposable if and only if $A_k\cong\bigoplus_{i=1}^t M_n(T_k)$ for some $n, t \in \N$ and a finite commutative local ring $T_k$ with principal maximal ideal $\mfm_k$ which is generated by the same uniformizer $\pi$ of $D$, so that $\mfm_k=\pi T_k$.
\end{Lem}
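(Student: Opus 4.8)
The plan is to translate $N_{D_k}$-decomposability into a concrete coordinate criterion and then read the ring structure of the finite algebra $A_k$ off of it. Working in the local setup ($D$ a DVR with $P=\pi D$, residue field $\F_{q_0}$, $A$ free over $D$, $A_k=A/P^kA$, $D_k=D/P^k$, $\pi^k=0$), the module $A_k$ is free over $D_k$, say $A_k=\bigoplus_{j=1}^s D_k\alpha_j$. Exactly as in Proposition~\ref{Int-decomp properties}(3), $N_{D_k}$-decomposability is equivalent to the statement that whenever $f\in N(A_k)$ is written $f=\sum_j f_j\alpha_j$ with $f_j\in D_k[X]$, every coordinate $f_j$ lies in $N_{D_k}(A_k)$; equivalently $N(A_k)=\bigoplus_j N_{D_k}(A_k)\alpha_j$. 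I would use this coordinate criterion throughout, together with the principal generators of the relevant null ideals in $D_k[X]$ (reduced mod $\pi$ to the PID $\F_{q_0}[X]$).

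For the ``if'' direction, assume $A_k\cong\bigoplus_{i=1}^t M_n(T_k)$. For a finite product $R=\prod_i R_i$ of $D_k$-algebras one has $N(R)=\prod_i N(R_i)$ while $N_{D_k}(R)=\bigcap_i N_{D_k}(R_i)$; since here all factors are the \emph{same} ring $M_n(T_k)$, these combine to $N_{D_k}(R)\otimes_{D_k}R=\bigoplus_i\bigl(N_{D_k}(M_n(T_k))\otimes_{D_k}M_n(T_k)\bigr)$, so decomposability of $R$ follows once $M_n(T_k)$ is shown $N_{D_k}$-decomposable. (This is precisely where \emph{equal} factors are essential: distinct factors would replace $N_{D_k}(R)$ by a proper sub-ideal and destroy the identity.) For the single factor I would chain two isomorphisms: first the null-ideal analogue of Frisch's theorem, $N(M_n(T_k))\cong N_{T_k}(M_n(T_k))\otimes_{T_k}M_n(T_k)$, over the center $T_k$; then the descent $N_{T_k}(M_n(T_k))\cong N_{D_k}(M_n(T_k))\otimes_{D_k}T_k$, which combine to $N(M_n(T_k))\cong N_{D_k}(M_n(T_k))\otimes_{D_k}M_n(T_k)$. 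The descent is where $\mfm_k=\pi T_k$ is used: this unramifiedness makes $T_k$ the Galois ring $D_k[\theta]$, free over $D_k$ with $\theta$ lifting a generator of $\F_q/\F_{q_0}$, so that the least-degree null polynomials already have coefficients in $D_k$ and survive the flat base change, just as $N_{\F_{q_0}}(\F_q)=(X^{q}-X)=N_{\F_q}(\F_q)\cap\F_{q_0}[X]$ in the residue case.

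For the ``only if'' direction I would exploit that $\pi A_k\subseteq J(A_k)$, so that $A_1=A_k/\pi A_k$ carries the semisimple part. The first step is to show decomposability forces $A_1$ semisimple: if $\bar x\neq 0$ were nilpotent, then $\bar x\cdot h(X)$ is a null polynomial of $A_1$ for a strict divisor $h$ of the minimal null generator (multiplying by a radical element lowers the vanishing order required), and its coordinates fail to be null over $\F_{q_0}$, contradicting the coordinate criterion (the dual-numbers computation $\,\epsilon(X^{q_0}-X)\in N(\F_{q_0}[\epsilon])\,$ is the prototype). Hence $A_1\cong\bigoplus_i M_{n_i}(\F_{q_i})$ by Wedderburn. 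Projecting $N(A_1)=N_{\F_{q_0}}(A_1)\otimes_{\F_{q_0}}A_1$ onto each simple factor $R_i$ and using $N_{\F_{q_0}}(A_1)=\bigcap_j N_{\F_{q_0}}(R_j)$ forces $N_{\F_{q_0}}(R_i)=N_{\F_{q_0}}(A_1)$ for every $i$, so all base null ideals coincide, pinning down a common $(n,q)$ and giving $A_1\cong\bigoplus M_n(\F_q)$. I would then lift the central idempotents of $A_1$ along the nilpotent ideal $\pi A_k$ to split $A_k=\bigoplus_i B_i$ with $B_i/\pi B_i\cong M_n(\F_q)$, lift a full set of matrix units into each $B_i$ to obtain $B_i\cong M_n(T_k^{(i)})$ with $T_k^{(i)}:=eB_ie$ (for $e$ a lifted diagonal idempotent) a local ring of residue field $\F_q$, and finally invoke the coordinate criterion once more to force $T_k^{(i)}$ commutative with maximal ideal $\pi T_k^{(i)}$. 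Unramifiedness then identifies each $T_k^{(i)}$ with the unique Galois ring over $D_k$ of residue degree $[\F_q:\F_{q_0}]$, so the factors are mutually isomorphic and $A_k\cong\bigoplus M_n(T_k)$.

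The main obstacle is the ``only if'' direction, and within it the two rigidity statements: that the common base null ideal forces a single pair $(n,q)$ across all Wedderburn components, and that each corner ring $T_k^{(i)}$ is unramified with maximal ideal $\pi T_k^{(i)}$. Both amount to converting the abstract isomorphism $N(A_k)\cong N_{D_k}(A_k)\otimes_{D_k}A_k$ into quantitative control of the least-degree monic generators of the null ideals over $D_k$, and I expect the finiteness of $\F_{q_0}$ and the Galois theory of finite fields to be used precisely here.
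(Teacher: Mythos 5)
The paper offers no proof of this lemma at all: it is quoted directly from \cite[Thm.~5.10]{Wer}, so your proposal can only be judged on its own merits against that cited result. Your ``if'' direction is essentially sound (product formulas for null ideals, the Frisch-type identity $N(M_n(T_k))=N_{T_k}(M_n(T_k))\cdot M_n(T_k)[X]$, and Galois descent along the unramified extension $D_k\subseteq T_k$ are all available), but the ``only if'' direction has genuine gaps, and they sit exactly at the hard points of the theorem.

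First, your semisimplicity step fails as stated. Take $D_1=\F_2$ and $A_1=M_2(\F_2)\times\F_2[\epsilon]$ with $\epsilon^2=0$. Here $N_{\F_2}(A_1)$ is generated by $g=X^2(X+1)^2(X^2+X+1)$, and the null generator of the semisimple quotient $A_1/J\cong M_2(\F_2)\times\F_2$ is this \emph{same} polynomial $g$, because $X^2(X+1)^2$ already kills $\F_2[\epsilon]$. So for a radical element $x$, the natural null polynomial $x\cdot g(X)$ has all coordinates divisible by $g$, and your mechanism (``multiplying by a radical element lowers the vanishing order required'') produces no strict divisor at all. The lemma still holds for this $A_1$ --- the polynomial $(0,\epsilon)(X^2+X)$ does violate the coordinate criterion --- but locating it requires a component-wise analysis (e.g.\ via annihilator ideals) that your sketch does not contain; the dual-numbers prototype is misleading precisely because a matrix component can absorb the multiplicities. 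Second, your passage from $A_1$ to $A_k$ lifts \emph{central} idempotents along the nil ideal $\pi A_k$; this is false in general (in $T_2(\F_q)$ the central idempotents of $T_2(\F_q)/J\cong\F_q\times\F_q$ do not lift to central idempotents), so the block splitting $A_k=\bigoplus_i B_i$ must itself be extracted from the decomposability hypothesis rather than from ring theory alone. Third, the heart of the level-$k$ statement --- that each corner ring $T_k^{(i)}=eB_ie$ is \emph{commutative} with maximal ideal $\pi T_k^{(i)}$ --- is exactly what you wave off with ``invoke the coordinate criterion once more''. At $k=1$ commutativity is Wedderburn's little theorem, but for $k\geq 2$ noncommutative finite local (even chain) rings with residue field $\F_q$ exist, e.g.\ $\F_{q^2}\oplus\F_{q^2}\theta$ with $\theta^2=0$ and $\theta a=a^q\theta$, and excluding both these and ramified coefficient rings is the substantive content of \cite[Thm.~5.10]{Wer}. (There is also the smaller unaddressed point that $N_{D_k}$-decomposability of $A_k$ must first be shown to descend to $N_{D_1}$-decomposability of $A_1$ before any level-$1$ argument can be invoked.)
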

Recall that a commutative ring is \emph{chain ring} if its set of ideals is totally ordered by inclusion. In particular, a ring $T_k$ as described in Lemma \ref{decomposition Ak} is a chain ring, as in \cite{McD}. Note that $T_1$ is equal to a finite field $\F_q$, which contains the residue field of $D$ at $P$.

In Lemma \ref{decomposition Ak}, the $A_k$'s form an inverse system with respect to the natural projection maps $A_k\to A_k/\pi^{k-1}A_k\cong A_{k-1}$,  and these maps are compatible with finite direct product and matrix rings. In particular, we have
\begin{equation*}
T_k\to T_k/\pi^{k-1}T_k\cong T_{k-1}
\end{equation*}
so the $T_k$'s also form an inverse system of chain rings. Moreover, since the nilpotency of $\pi$ in $A_k$ is $k$, it follows that the nilpotency of $\pi$ in $T_k$ is also $k$ and the residue field of $T_k$ is $T_1=\F_q$.

Given that the $P$-adic completion $\whA_P$ is equal to the inverse limit $\varprojlim A_k$, it is natural to consider the inverse limit of the chain rings $T_k$. It is well known that the completion of any DVR $V$ with maximal ideal $\mathfrak m$ is realized as the inverse limit of the chain rings $V/\mathfrak m^k$, $k\in\N$. The next lemma shows that, under certain mild assumptions, the converse is also true (it is probable that this lemma is a known result, but a proof was not found in the available literature, so one is provided for the sake of the reader).

\begin{Lem}\label{inverse limit chain rings}
Let $\{T_k\}_{k\in\N}$ be an inverse system of chain rings with maximal ideals $\mfm_k=\pi_k T_k$ such that $k$ is the nilpotency of $\pi_k$, and the transition maps $\theta_k:T_k\to T_{k-1}$ are all surjective (so, without loss of generality, we may assume that $\pi_k\mapsto \pi_{k-1}$). We assume that $T_1$ is the common residue field of the rings $T_k$.

Then the inverse limit $\whT = \varprojlim T_k$ is a complete DVR with residue field isomorphic to $T_1$.
\end{Lem}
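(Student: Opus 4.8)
```latex
The plan is to build the complete DVR structure on $\whT=\varprojlim T_k$ directly from the inverse system, verifying in turn that $\whT$ is a local ring, then a domain, then that its maximal ideal is principal, and finally that it is complete. First I would set up coordinates: an element of $\whT$ is a compatible sequence $(t_k)_{k\in\N}$ with $t_k\in T_k$ and $\theta_k(t_k)=t_{k-1}$. Since each $\theta_k$ is surjective, the projection $\whT\to T_k$ is surjective for every $k$; this is the standard Mittag-Leffler type argument for inverse limits of surjective systems and will be used repeatedly. The compatible sequence $\pi=(\pi_k)_{k\in\N}$ (using the normalization $\pi_k\mapsto\pi_{k-1}$) will be the candidate uniformizer.

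Next I would identify the units and the maximal ideal. An element $(t_k)_k$ is a unit in $\whT$ if and only if each $t_k$ is a unit in $T_k$, and because the $T_k$ are local with $\theta_k$ surjective, this happens if and only if $t_1\neq 0$ in the residue field $T_1$. Hence the non-units form the ideal $\whm=\ker(\whT\to T_1)$, which shows $\whT$ is local with residue field $T_1$. To see $\whm=\pi\whT$, I would show every element of $\whm$ is divisible by $\pi$: given $(t_k)_k\in\whm$, each $t_k\in\mfm_k=\pi_k T_k$, so one can write $t_k=\pi_k s_k$; the subtle point is to choose the $s_k$ \emph{compatibly} so that $(s_k)_k\in\whT$. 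Because $\pi_k$ is a nonzerodivisor on $\pi_k T_k$ only up to the kernel $\mfm_k^{k-1}=\ker(\theta_k)^{?}$, the factor $s_k$ is determined modulo the annihilator of $\pi_k$, and lifting compatibly requires the surjectivity of $\theta_k$ together with a dimension/length count on the chain ring $T_k$ (whose ideals are exactly $\pi_k^j T_k$ for $0\le j\le k$). I expect this compatible-lifting step to be the main obstacle, and I would handle it by an inverse-limit argument: the sets of admissible $s_k$ form a nonempty inverse subsystem with surjective transitions, so the limit is nonempty.

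Having $\whm=\pi\whT$, I would then show that every nonzero element of $\whT$ has the form $u\pi^n$ for a unit $u$ and a unique $n\ge 0$. Given nonzero $(t_k)_k$, let $n$ be the largest integer such that $t_k\in\pi_k^n T_k$ for all $k$ (finite because the nilpotency of $\pi_k$ is exactly $k$, so high powers cannot persist unless the element is zero); dividing out $\pi^n$ using the principality just established yields a unit. This simultaneously proves $\whT$ is a domain (a product $u\pi^m\cdot v\pi^n=uv\,\pi^{m+n}$ is nonzero since $uv$ is a unit) and that its ideals are totally ordered, namely $\pi^n\whT$, so $\whT$ is a discrete valuation ring. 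Finally, completeness follows because $\bigcap_n \pi^n\whT=0$ (an element in every $\pi^n\whT$ would have $t_k\in\pi_k^k T_k=0$ for each $k$) and the natural map $\whT\to\varprojlim \whT/\pi^n\whT$ is an isomorphism: the filtration $\pi^n\whT=\ker(\whT\to T_n)$ identifies $\whT/\pi^n\whT\cong T_n$, so $\varprojlim\whT/\pi^n\whT\cong\varprojlim T_n=\whT$. This last identification $\pi^n\whT=\ker(\whT\to T_n)$ is really the crux tying everything together, and once the principality step is secured it follows from the length count on each $T_k$.
```
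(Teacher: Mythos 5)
Your overall route is the same as the paper's: realize $\whT$ as coherent sequences, show it is local (a sequence is a unit exactly when every component is, equivalently $t_1\neq 0$, so the non-units are $\ker(\whT\to T_1)$), prove $\whm=\pi\whT$ by compatible division, conclude that $\whT$ is a DVR, and obtain completeness from the identification $\ker(\whT\to T_k)=\pi^k\whT$, which gives $\whT/\whm^k\cong T_k$ and hence $\whT\cong\varprojlim \whT/\whm^k$. The only structural difference is at the DVR step: you prove by hand that every nonzero element is $u\pi^n$ with $u$ a unit (thereby getting the domain property and the total ordering of ideals directly), whereas the paper observes that $\pi$ is not nilpotent, that $\bigcap_{k}\whm^k=(0)$, and then cites Bourbaki. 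Your version is more self-contained; the paper's is shorter.

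However, one justification in your sketch is wrong as stated, and it occurs at exactly the step you single out as the main obstacle (the step the paper waves through with ``one may verify that $(b_k)$ is a coherent sequence''). For $(t_k)_k\in\whm$, the set $S_k=\{s\in T_k \mid \pi_k s=t_k\}$ of admissible quotients is a coset of $\operatorname{Ann}_{T_k}(\pi_k)=\pi_k^{k-1}T_k$, and the restricted transition maps $S_k\to S_{k-1}$ are \emph{not} surjective: since $\theta_k$ is surjective and $\pi_{k-1}^{k-1}=0$, we have $\theta_k\bigl(\pi_k^{k-1}T_k\bigr)=\pi_{k-1}^{k-1}T_{k-1}=0$, so the image $\theta_k(S_k)$ is a single point, while $S_{k-1}$ is a coset of the nonzero ideal $\pi_{k-1}^{k-2}T_{k-1}$ and so has more than one element. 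Thus ``nonempty inverse subsystem with surjective transitions'' is not available. The collapse that destroys surjectivity is, however, precisely what repairs the argument: for each $k$ let $s_k$ be the unique element of $\theta_{k+1}(S_{k+1})$; then $s_k\in S_k$ (because $\theta_{k+1}(S_{k+1})\subseteq S_k$), and since $\theta_k$ is constant on $S_k$ with value $s_{k-1}$, the sequence $(s_k)_k$ is coherent and satisfies $\pi_k s_k=t_k$ for all $k$. With this one-line fix (which you also need later, iterated, to justify $\ker(\whT\to T_k)=\pi^k\whT$ from the length count), your proof is complete and matches the paper's.
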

\begin{proof}
We identify the inverse limit $\whT$ with the subset of coherent sequences of the direct product of the $T_k$'s:
\begin{equation*}
\whT = \varprojlim T_k = \{(a_k)\in\prod_{k\geq 1}T_k \mid a_{k+1}\mapsto a_{k},\forall k\geq1\}.
\end{equation*}
Let
\begin{equation*}
\whm = \{(a_k)\in \whT \mid a_{k}\in \mfm_k,\forall k\geq1\}.
\end{equation*}
Note that, for $(a_k)\in \whT$, $(a_k)$ is in $\whm$ if and only if for some $k$ we have $a_k \in \mfm_k$. Clearly, $\whm$ is an ideal of $\whT$. We claim that every element of $\whT \setminus \whm$ is invertible, which shows that $\whT$ is a local ring with maximal ideal $\whm$. Indeed, let $(a_k)\in \whT \setminus \whm$. Then for each $k\in\N$, $a_k$ is invertible in $T_k$, and it is easy to see that $(a_k^{-1})$ is a coherent sequence and is the inverse of $(a_k)$ in $\whT$.

Moreover, $\whm = \pi \whT$, where $\pi = (\pi_k) \in \whT$. In fact, by definition $\pi \in \whm$. Conversely, if $(a_k)\in \whm$, then, for all $k\in\N$, we have $a_k=\pi_k b_k$, for some $b_k\in T_k$. One may verify that $(b_k)$ is a coherent sequence, so $(a_k) = \pi(b_k) \in \pi \whT$. Note also that $\pi$ is not a nilpotent element of $\whT$, because if $e\in\N$ is such that $\pi^e=(\pi_k^e)=0$, then we have $e\geq k$, for all $k\in\N$, a contradiction.

Now, we clearly have $\bigcap_{k\geq 1}\whm^k=(0)$, so by \cite[Chap. VI, \S 1, n. 4, Prop. 2]{BourbakiAlgComm} $\whT$ is a DVR. It remains to show that $\whT$ is complete with respect to the $\whm$-adic topology. 

Since each transition map $\theta_k:T_k\to T_{k-1}$ is surjective, each projection $\psi_k:\widehat{T}\to T_k$ is also surjective. Furthermore, the kernel of $\psi_k$ is $\pi^k \whT$, since the maximal ideal of $T_k$ has nilpotency $k$ by assumption. So, we may identify each chain ring $T_k$ with the residue ring $\whT/\pi^k \whT = \whT/\whm^k$. Hence, $\whT = \varprojlim \whT/\whm^k$, which shows that the topology on $\whT$ as the inverse limit of the chain rings $\{T_k\}_{k\in\N}$ coincides with the $\whm$-adic topology and that $\whT$ is complete with respect to the $\whm$-adic topology \cite[Chap. III, \S 2, n. 6]{BourbakiAlgComm}.
\end{proof}

The next theorem gives the promised characterization of $\Int_K$-decomposable algebras in terms of the completions $\whA_P$. 
Given a prime ideal $P$ of $D$, we denote by $\whD_P$ the $P$-adic completion of $D$, and denote by $\whK_P$ the $P$-adic completion of $K$ (which is also the fraction field of $\whD_P$).

\begin{Thm}\label{Int_K-decomposability Theorem}
Let $D$ be a Dedekind domain with finite residue rings. Let $A$ be a $D$-algebra of finite type with standard assumptions. Then, $A$ is $\Int_K$-decomposable if and only if, for 
each nonzero prime ideal $P\subset D$, there exist $n, t \in\N$ such that
\begin{equation}\label{decomposition AP}
\whA_P\cong\bigoplus_{i=1}^t M_n(\whT_P)
\end{equation} 
where $\whT_P$ is a complete DVR with finite residue field and quotient field which is a finite unramified extension of $\whK_P$. 

In particular, if $A$ is commutative then $A$ is $\Int_K$-decomposable if and only if, for each nonzero prime ideal $P\subset D$, $\whA_P\cong\bigoplus_{i=1}^t \whT_P$, where  $\whT_P$ is as above.
\end{Thm}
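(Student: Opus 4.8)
The plan is to reduce the statement to the already-established classification in Theorem \ref{Int-decomp classification} by passing through the $P$-adic completions, exploiting the inverse-limit machinery developed in Lemmas \ref{decomposition Ak} and \ref{inverse limit chain rings}. The key observation is that $\Int_K$-decomposability is a local property (Proposition \ref{IntKdeclocal}) and, by Lemma \ref{Decomposability equivalence}, is equivalent to $N_{D/P^k}$-decomposability of every $A_k = A/P^kA$. So I would first localize at a fixed nonzero prime $P$, assume $D$ is a DVR with uniformizer $\pi$, and work entirely with the inverse system $\{A_k\}_{k \in \N}$ whose inverse limit is the completion $\whA_P$.

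For the forward direction, suppose $A$ is $\Int_K$-decomposable. Then each $A_k$ is $N_{D_k}$-decomposable, so by Lemma \ref{decomposition Ak} we may write $A_k \cong \bigoplus_{i=1}^{t} M_n(T_k)$ for a finite chain ring $T_k$ with maximal ideal $\mfm_k = \pi T_k$ of nilpotency $k$ and common residue field $\F_q = T_1$. The numbers $n$ and $t$ are determined by $A_1 = A/PA \cong \bigoplus_{i=1}^t M_n(\F_q)$ (Theorem \ref{Int-decomp classification}) and are therefore independent of $k$. As noted in the discussion preceding Lemma \ref{inverse limit chain rings}, the transition maps $A_k \to A_{k-1}$ respect the direct-sum and matrix-ring decompositions and induce surjective transition maps $\theta_k : T_k \to T_{k-1}$ with $\pi_k \mapsto \pi_{k-1}$. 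Passing to the inverse limit, and using that taking inverse limits commutes with the finite operations of direct sum and forming $M_n(-)$, I obtain
\begin{equation*}
\whA_P = \varprojlim A_k \cong \bigoplus_{i=1}^{t} M_n\bigl(\varprojlim T_k\bigr).
\end{equation*}
Setting $\whT_P := \varprojlim T_k$, Lemma \ref{inverse limit chain rings} applies directly (its hypotheses are exactly the properties just recorded for the system $\{T_k\}$), yielding that $\whT_P$ is a complete DVR with finite residue field $\F_q$. That the fraction field of $\whT_P$ is unramified over $\whK_P$ is immediate from $\mfm_k = \pi T_k$: the maximal ideal of $\whT_P$ is generated by the image of $\pi$, so the uniformizer of $\whD_P$ remains a uniformizer in $\whT_P$, giving ramification index $1$. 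This establishes \eqref{decomposition AP}.

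For the converse, suppose \eqref{decomposition AP} holds at every $P$. Since $\whT_P$ is a complete DVR in which $\pi$ is a uniformizer, the quotient $\whT_P / \pi^k \whT_P$ is a finite chain ring $T_k$ with $\mfm_k = \pi T_k$ of nilpotency $k$, and reducing \eqref{decomposition AP} modulo $\pi^k$ gives $A_k \cong \bigoplus_{i=1}^t M_n(T_k)$. By Lemma \ref{decomposition Ak} each $A_k$ is then $N_{D_k}$-decomposable, so by Lemma \ref{Decomposability equivalence}, $A$ is $\Int_K$-decomposable. The commutative case follows by specializing $n = 1$: then $M_n(\whT_P) = \whT_P$, and $A$ commutative forces the matrix size to be $1$ in the decomposition (equivalently, $A/PA \cong \bigoplus_i \F_q$ as in the second part of Theorem \ref{Int-decomp classification}).

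I expect the main subtlety to be the step asserting that the inverse limit commutes with the decomposition $A_k \cong \bigoplus_i M_n(T_k)$ in a way that produces a \emph{single} ring $\whT_P$ independent of $k$ and of the summand index $i$. This requires checking that the isomorphisms of Lemma \ref{decomposition Ak} can be chosen compatibly across the tower, i.e. that the transition maps genuinely restrict to surjections $T_k \to T_{k-1}$ matching uniformizers rather than merely abstract ring maps; the argument preceding Lemma \ref{inverse limit chain rings} is what makes this precise, and verifying that the residue field $\F_q$ is constant along the tower (so the hypotheses of Lemma \ref{inverse limit chain rings} are met) is the crux. The unramifiedness conclusion is then essentially bookkeeping about the uniformizer.
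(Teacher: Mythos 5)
Your proposal is correct and follows essentially the same route as the paper: localize to the DVR case, apply Lemma \ref{decomposition Ak} level-by-level, pass to the inverse limit via Lemma \ref{inverse limit chain rings} (using the compatibility of the transition maps with the matrix/direct-sum decompositions), and read off unramifiedness from the fact that $\pi$ generates the maximal ideal of each $T_k$ and hence of $\whT_P$. The only cosmetic difference is in the converse, where the paper simply reduces modulo $P$ and invokes Theorem \ref{Int-decomp classification}, while you reduce modulo $\pi^k$ for every $k$ and route through Lemmas \ref{decomposition Ak} and \ref{Decomposability equivalence} --- an equivalent argument.
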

\begin{proof}
Without loss of generality, we may suppose that $D$ is a DVR with maximal ideal $P=\pi D$. We retain the notation introduced at the beginning of this section. 

Suppose first that $A$ is $\Int_K$-decomposable. Then, the $P$-adic completion $\whA_P$ of $A$ is equal to the inverse limit of the rings $A_k$ \cite[Chap. III, \S 2, n. 6]{BourbakiAlgComm}. For each $k\in\N$, let $T_k$ be as in Lemma \ref{decomposition Ak},  and let $\whT_P$ be their inverse limit, which is a complete DVR with maximal ideal $\whm_P$ by Lemma \ref{inverse limit chain rings}. Since the formation of inverse limit commutes with finite direct sums, by Lemma \ref{decomposition Ak} we have
\begin{equation}\label{AP direct sum matrix rings}
\widehat{A}_P=\varprojlim_{k\geq 1}A_k=\bigoplus_{i=1}^t\varprojlim_{k\geq 1}M_n(T_k)=\bigoplus_{i=1}^t M_n(\widehat{T}_P).
\end{equation}

Now, $D_k \subseteq A_k \cong \bigoplus_i M_n(T_k)$, so by \cite[Lem. 5.1]{Wer} each matrix ring $M_n(T_k)$ contains a (central) copy of $D_k$, which is contained in the set of scalar matrices $T_k$. Since the maximal ideals of $D_k$ and $T_k$ have the same generator $\pi$, $D_k\subseteq T_k$ is an unramified extension of chain rings \cite[p. 281]{McD}. So, by Lemma \ref{inverse limit chain rings}, $\whD_P = \varprojlim D_k\subseteq \whT_P= \varprojlim T_k$ is an unramified extension of complete DVRs since $\pi$ generates the maximal ideals of both $\whD_P$ and $\whT_P$. Let $\whF_P$ be the quotient field of $\whT_P$, let $\F_q$ be the residue field of $\whT_P$, and let $\F_{q_0}$ be the residue field of $\whD_P$. Then, $\whF_P/\whK_P$ is an unramified field extension of finite degree $[\whF_P : \whK_P] = [\F_q : \F_{q_0}]$. This establishes that if $A$ is $\Int_K$-decomposable, then we have the desired decomposition (\ref{decomposition AP}) for $\whA_P$.

Conversely, suppose (\ref{decomposition AP}) holds, where $\whT_P$ is a complete DVR which is a finite unramified extension of $\whD_P$. In particular, $\whP \cdot \whT_P = \whm$, the maximal ideal of $\whT_P$. As above, let $\F_q$ be the (finite) residue field of $\whT_P$. Then,  
\begin{equation*}
\whA/\whP\whA \cong A/PA \cong \bigoplus_{i=1}^t M_n(\F_q)
\end{equation*}
so by Theorem \ref{Int-decomp classification} $A$ is $\Int_K$-decomposable.
\end{proof}

Theorem \ref{Int_K-decomposability Theorem} is the analog of Theorem \ref{Int-decomp classification}. There is also an analogous form of Theorem \ref{When Int_K(A) = Int(D)}, the proof of which requires the next lemma.

\begin{Lem}\label{DVR lemma}
Let $D$ be a DVR with maximal ideal $P = \pi D$. Let $A$ be a $D$-algebra with standard assumptions, and let $\whA$ be the $P$-adic completion of $A$. Then, $\Int_K(\whA) = \Int_K(A)$.
\end{Lem}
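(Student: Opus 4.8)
The plan is to bypass any topological or density argument and instead detect integrality at each finite level $A/\pi^m A$, exploiting the correspondence between integer-valued polynomials and null ideals recalled after Definition \ref{Null ideal}, together with the completion identity $\whA/\pi^m\whA\cong A/\pi^m A$.

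First I would clear denominators. Since $D$ is a DVR with $P=\pi D$ and $K=D[1/\pi]$, every $f\in K[X]$ can be written as $f=g/\pi^m$ with $g\in D[X]$ and $m\ge 0$. Writing $\overline g$ for the image of $g$ in $(D/\pi^m D)[X]$, the null-ideal correspondence (applied with $d=\pi^m$) gives
\[
f\in\Int_K(A)\iff \overline g\in N_{D/\pi^m D}(A/\pi^m A).
\]
The same correspondence applied to the $\whD$-algebra $\whA$ --- which again satisfies the standard assumptions, being torsion-free over $\whD$ --- yields
\[
f\in\Int_K(\whA)\iff \overline g\in N_{\whD/\pi^m\whD}(\whA/\pi^m\whA).
\]
Here I use that for $f$ with coefficients in $K$ the condition $f(\whA)\subseteq\whA$ is exactly $g(\whA)\subseteq\pi^m\whA$, which is where torsion-freeness of $\whA$ (so that $\pi$ is a non-zero-divisor and $\whA\hookrightarrow\whB$) enters.

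Second I would identify the two null ideals. Because $A$ is torsion-free, the projection $\whA=\varprojlim_k A/\pi^kA\to A/\pi^mA$ is surjective with kernel exactly $\pi^m\whA$, so $\whA/\pi^m\whA\cong A/\pi^m A$ as $D/\pi^m D$-algebras (and likewise $\whD/\pi^m\whD\cong D/\pi^m D$); these isomorphisms are compatible with reduction of polynomials in $D[X]$ and send $\overline g$ to $\overline g$. Hence they carry $N_{\whD/\pi^m\whD}(\whA/\pi^m\whA)$ onto $N_{D/\pi^m D}(A/\pi^m A)$, so the two right-hand conditions above coincide. Therefore $f\in\Int_K(A)$ if and only if $f\in\Int_K(\whA)$ for every $f\in K[X]$, and the two rings are equal.

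The argument is short, and no genuine obstacle arises once the problem is recast this way; the point to be careful about is the recasting itself. The substantive observation is that the a priori ``infinite'' condition $f(\whA)\subseteq\whA$ is detected modulo $\pi^m$, after which the identity $\whA/\pi^m\whA = A/\pi^m A$ does all the work. Concretely, the two places needing verification are (i) that the division-by-$\pi^m$ equivalence underlying the null-ideal correspondence remains valid over $\whA$, for which one needs $\whA$ torsion-free over $\whD$, and (ii) that the canonical level-$m$ isomorphism is an isomorphism of $D/\pi^m D$-algebras matching the reductions of integral polynomials. Notably, neither step requires $A$ to be of finite type.
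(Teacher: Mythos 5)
Your argument is correct and is essentially the paper's own proof in null-ideal packaging: the paper also writes $f = g/\pi^k$ and, given $\alpha \in \whA$, picks $a \in A$ with $\alpha \equiv a \pmod{\pi^k \whA}$ (i.e., uses exactly your fact that $\whA \to A/\pi^k A$ is surjective with kernel $\pi^k\whA$, which holds by torsion-freeness), then concludes from $g(\alpha) \equiv g(a) \pmod{\pi^k\whA}$ that $f(\alpha) \in \whA$. Your identification of $N_{\whD/\pi^m\whD}(\whA/\pi^m\whA)$ with $N_{D/\pi^m D}(A/\pi^m A)$ is the same computation run through the null-ideal correspondence, and, like the paper's proof, it uses only torsion-freeness and not finite type.
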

\begin{proof}
The containment $\Int_K(\whA) \subseteq \Int_K(A)$ is clear, since $A$ embeds in $\whA$. Conversely, let $f \in \Int_K(A)$ and $\alpha \in \whA$. Suppose $f(X) = g(X)/\pi^k$, where $g \in D[X]$ and $k \in \N$. If $k = 0$, then $f \in D[X] \subseteq \Int_K(\whA)$, so assume that $k > 1$.

Via the canonical projection $\whA \to A/\pi^k A$, we see that there exists $a\in A$ such that $\alpha \equiv a \pmod{\pi^k \whA}$. Since the coefficients of $g$ are central in $A$, we get $g(\alpha)\equiv g(a) \pmod{\pi^k \whA}$. Thus, $f(\alpha)=f(a)+\lambda/\pi^k$, where $\lambda\in\pi^k \whA$, so that $f(\alpha)\in \whA$. Hence, $f \in \Int_K(\whA)$ and $\Int_K(\whA) = \Int_K(A)$.
\end{proof} 

\begin{Thm}\label{Int_K(A) = Int(D) completions}
Let $D$ be a Dedekind domain with finite residue rings. Let $A$ be a $D$-algebra of finite type with standard assumptions. Then, $\Int_K(A)=\Int(D)$ if and only if, for each nonzero prime ideal $P$ of $D$, $\whA_P\cong\bigoplus_{i=1}^t \widehat{D}_P$, for some $t\in \N$.
\end{Thm}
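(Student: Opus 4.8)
The plan is to reduce to the local case and then bridge the two conditions through Theorem \ref{When Int_K(A) = Int(D)} and the completion machinery already in place. By Proposition \ref{IntKdeclocal} together with the localization identities $\Int_K(A)_P \cong \Int_K(A_P)$ and $\whA_P \cong \whD_P \otimes_D A$, all of which depend only on local data at $P$, I may assume that $D$ is a DVR with maximal ideal $P = \pi D$ and residue field $\F_{q_0}$, and (since $D$ is then a DVR and $A$ is finite type and torsion-free) that $A$ is free of finite rank. Writing $\whA = \whA_P$ and $\whD = \whD_P$, the whole statement reduces to the chain of equivalences $\Int_K(A) = \Int(D) \iff A/PA \cong \bigoplus_{i=1}^t D/P \iff \whA \cong \bigoplus_{i=1}^t \whD$. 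The first equivalence is exactly Theorem \ref{When Int_K(A) = Int(D)} in the local case, so the new content is the second equivalence relating the residue condition to the full completion.

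One direction of that second equivalence is immediate: if $\whA \cong \bigoplus_{i=1}^t \whD$, then reducing modulo $\whP$ and using the canonical isomorphisms $\whA/\whP\whA \cong A/PA$ and $\whD/\whP\whD \cong D/P$ gives $A/PA \cong \bigoplus_{i=1}^t D/P$. For the converse, suppose $A/PA \cong \bigoplus_{i=1}^t D/P = \bigoplus_{i=1}^t \F_{q_0}$. This is precisely the case $n = 1$, $\F_q = \F_{q_0}$ of the criterion in Theorem \ref{Int-decomp classification}, so $A$ is $\Int_K$-decomposable, and Theorem \ref{Int_K-decomposability Theorem} (equivalently, running Lemmas \ref{decomposition Ak} and \ref{inverse limit chain rings} through the inverse-limit construction of $\whA$) yields $\whA \cong \bigoplus M_{n}(\whT_P)$ with $\whT_P$ a complete DVR unramified over $\whD$ whose residue field $\F_q$ is the one appearing in the decomposition of $A/PA$. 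Matching $\bigoplus M_n(\F_q)$ against $\bigoplus \F_{q_0}$ and invoking uniqueness of the Wedderburn decomposition of the semisimple ring $A/PA$ forces each simple factor $M_n(\F_q)$ to be the field $\F_{q_0}$, hence $n = 1$ and $\F_q = \F_{q_0}$. Since $\whT_P$ is then unramified over $\whD$ with residue field equal to that of $\whD$, its residue degree $[\F_q : \F_{q_0}]$ is $1$, so $\whT_P = \whD$ and $\whA \cong \bigoplus_{i=1}^t \whD$.

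In this argument Lemma \ref{DVR lemma} supplies the bridge between the integer-valued polynomial condition and completion-level data: applied to $A$ and to $D$ it gives $\Int_K(\whA) = \Int_K(A)$ and $\Int_K(\whD) = \Int(D)$, which is what legitimizes reading off the equality $\Int_K(A) = \Int(D)$ from the structure of $\whA$ rather than of $A$. (One can alternatively apply Theorem \ref{When Int_K(A) = Int(D)} directly to the complete pair $(\whD,\whA)$, whose residue condition coincides with that of $(D,A)$.)

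The main obstacle is the converse direction of the second equivalence. The residue hypothesis only controls $A/PA$, and lifting it to an isomorphism of the entire completion $\whA$ requires the chain-ring inverse-limit apparatus of Lemmas \ref{decomposition Ak} and \ref{inverse limit chain rings}, followed by the identification $\whT_P = \whD$. The crux is the elementary but essential fact that an unramified extension of complete DVRs with trivial residue field extension is itself trivial, which is exactly what collapses each matrix block $M_n(\whT_P)$ down to a single copy of $\whD$.
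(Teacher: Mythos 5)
Your proof is correct and takes essentially the same route as the paper's: the hard (forward) direction is identical---pass from $\Int_K(A)=\Int(D)$ to the residue condition $A/PA\cong\bigoplus_i D/P$, apply Theorem \ref{Int_K-decomposability Theorem} to obtain $\whA_P\cong\bigoplus_i M_n(\whT_P)$, and match residue fields via $\whA_P/\whP\whA_P\cong A/PA$ to force $n=1$ and $\whT_P=\whD_P$. The only divergence is in the easy direction, where the paper computes $\Int_K(A)=\Int_K(\whA)=\Int_K(\bigoplus_i\whD)=\Int_K(\whD)=\Int(D)$ directly from Lemma \ref{DVR lemma}, while your primary argument instead reduces modulo $P$ and invokes the reverse implication of Theorem \ref{When Int_K(A) = Int(D)}; both are valid one-step arguments, so this is a cosmetic difference.
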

\begin{proof}
Note that if $t$ is the rank of $A$, defined as the dimension of $B=K\otimes_D A$ over $K$, then, for each prime ideal $P$ of $D$, the rank of $A_P=D_P \otimes_D A\subset B$ is equal to $t$, so that $t$ does not depend on the particular prime ideal $P$. As we have already remarked, $A_P$ is free as a $D_P$-module, so that $A_P\cong \bigoplus_{i=1}^t D_P$ (as $D_P$-modules).

We may work locally since $\whA_P=(\widehat{A_P})_{PD_P}$. So, we will assume that $D$ is a DVR and we will omit the suffix $P$.

If $\Int_K(A)=\Int(D)$ then $A$ is $\Int_K$-decomposable, by Proposition \ref{IntKA=IntD implies A Int-decomposable}. Hence, by Theorem \ref{When Int_K(A) = Int(D)}, we have $A/PA\cong \bigoplus_{i=1}^t D/P$. By Theorem \ref{Int_K-decomposability Theorem}, the completion $\whA$ decomposes as a finite direct sum of matrix rings over  a complete DVR $\whT$, which is a finite unramified extension of $\whD$ (so that $\whP \cdot \whT$ is equal to the maximal ideal $\whm$ of $\whT$). Since $\whA/\whP\whA \cong A/PA$, formula (\ref{decomposition AP}) becomes $\whA \cong \bigoplus_{i=1}^t \whD$, that is, $n=1$ and $\whD = \whT$, because the residue field $\whT/\whm$ of $\whT$ must be isomorphic to $D/P$, the residue field of $\whD$.

Conversely, if $\whA$ is isomorphic to a finite direct sum of copies of $\whD$, then by Lemma \ref{DVR lemma} we have 
\begin{equation*}
\Int_K(A)=\Int_K(\whA)=\Int_K(\bigoplus_i \whD)=\Int_K(\whD)=\Int(D)
\end{equation*}
as desired.
\end{proof}

At this point, it is apparent that Int-decomposable algebras are related to matrix algebras via their residue rings and completions. At the close of this paper, we will prove (Corollary \ref{IntQ decomposable algebras}) that when $D = \Z$, $A$ is $\Int_\Q$-decomposable if and only if there exist $n, t \in \N$ such that $A \cong \bigoplus_{i=1}^t M_n(\Z)$ (which implies that if $A$ is $\Int_\Q$-decomposable, then $\Int_\Q(A) = \Int_\Q(M_n(\Z))$ for some $n$). However, in general Int-decomposable algebras need not be direct sums of matrix algebras. We end this section with a theorem that considers some of the conditions that link matrix algebras with Int-decomposable algebras, and examines the implications among these conditions.

\begin{Thm}\label{Implications}
Let $D$ be a Dedekind domain with finite residue rings and $A$ a $D$-algebra of finite type with standard assumptions. Consider the following four conditions:
\begin{enumerate}[(i)]
\item \label{cond1} there exists $n \in \N$ such that $A \cong M_n(D)$.
\item \label{cond2} there exists $n\in\N$ such that $\widehat{A}_P\cong M_n(\whD_P)$, for all primes $P$ of $D$.
\item \label{cond3} $A$ is $\Int_K$-decomposable.
\item \label{cond4} there exists $n \in \N$ such that $\Int_K(A) = \Int_K(M_n(D))$.
\end{enumerate}
Then, the following implications hold: $(\ref{cond1}) \Rightarrow (\ref{cond2})$, $(\ref{cond2}) \Rightarrow (\ref{cond3})$, and $(\ref{cond2}) \Rightarrow (\ref{cond4})$; but for none of these three implications does the converse hold. Finally, $(\ref{cond3}) \not\Rightarrow (\ref{cond4})$ and $(\ref{cond4}) \not\Rightarrow (\ref{cond3})$.
\end{Thm}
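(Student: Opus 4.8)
The plan is to prove the three stated implications directly and to kill the five claimed non-implications by exhibiting four explicit algebras.

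For the implications: for (i)$\Rightarrow$(ii), since $A$ is of finite type we have $\whA_P\cong\whD_P\otimes_D A$, and $A\cong M_n(D)$ yields $\whD_P\otimes_D M_n(D)\cong M_n(\whD_P)$ with a single $n$ serving every $P$. For (ii)$\Rightarrow$(iii), I would apply Theorem \ref{Int_K-decomposability Theorem} with $t=1$ and $\whT_P=\whD_P$: the latter is a complete DVR with finite residue field whose fraction field $\whK_P$ is an unramified (degree-one) extension of itself, so the criterion of that theorem holds. For (ii)$\Rightarrow$(iv), I would work locally using $\Int_K(A)=\bigcap_P\Int_K(A_P)$. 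By Lemma \ref{DVR lemma} over the DVR $D_P$ one has $\Int_K(A_P)=\Int_K(\whA_P)$ and $\Int_K(M_n(D_P))=\Int_K(M_n(\whD_P))$; the isomorphism $\whA_P\cong M_n(\whD_P)$ extends to a $\whK_P$-algebra isomorphism commuting with the central coefficients from $K$, so it identifies $\Int_K(\whA_P)$ with $\Int_K(M_n(\whD_P))$, giving $\Int_K(A_P)=\Int_K(M_n(D_P))$ for all $P$ and, after intersecting, (iv).

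For the converses I would use four algebras. First, $A=\End_D(M)$ with $D$ of nontrivial class group and $M=D^{\,n-1}\oplus I$ a non-free rank-$n$ projective: since $M_P$ is free, $\whA_P\cong M_n(\whD_P)$, so (ii) holds, while by Morita theory $\End_D(M)\cong M_n(D)$ forces $M\cong L^{\oplus n}$, i.e. $[\det M]=[I]\in n\,\mathrm{Cl}(D)$; this fails for a suitable $I$ (e.g. a nontrivial class when $\mathrm{Cl}(D)=\Z/2$ and $n=2$), so (i) fails, giving (ii)$\not\Rightarrow$(i). Second, $A=D\times D$: it is $\Int_K$-decomposable with $\Int_K(A)=\Int(D)=\Int_K(M_1(D))$, yet $\whA_P\cong\whD_P\times\whD_P$ has rank $2\neq n^2$ and so is not a single matrix ring, establishing (iii)$\not\Rightarrow$(ii) and (iv)$\not\Rightarrow$(ii) at once. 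Third, $A=M_{n_1}(D)\times M_{n_2}(D)$ with $n_1<n_2$: the block inclusion $M\mapsto M\oplus 0$ shows $\Int_K(M_{n_2}(D))\subseteq\Int_K(M_{n_1}(D))$, whence $\Int_K(A)=\Int_K(M_{n_1}(D))\cap\Int_K(M_{n_2}(D))=\Int_K(M_{n_2}(D))$ and (iv) holds, but $A/PA\cong M_{n_1}(\F_q)\times M_{n_2}(\F_q)$ is not a sum of copies of one matrix ring, so Theorem \ref{Int-decomp classification} gives (iv)$\not\Rightarrow$(iii). Fourth, $A=\mathcal{O}_L$ for a nontrivial unramified Galois extension $L/K$ (so $K\neq\Q$): here each $\mathcal{O}_L/P\mathcal{O}_L\cong\bigoplus_{i=1}^{g}\F_{q^f}$ is a sum of copies of $M_1(\F_{q^f})$, so Theorem \ref{Int-decomp classification} gives (iii).

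The two substantive points, and the main obstacle, are the $\Int_K$-computations for the last two examples. The inclusion $\Int_K(M_{n_2}(D))\subseteq\Int_K(M_{n_1}(D))$ is the clean block-diagonal observation above. The hard part is ruling out (iv) for $\mathcal{O}_L$ for \emph{every} $n$ simultaneously. For this I would invoke the null-ideal correspondence ($g/\pi\in\Int_K(A)$ iff $\bar g\in N_{D/P}(A/PA)$) together with Chebotarev density to select a prime $P$ of residue degree $f>1$. There $N_{\F_q}(\mathcal{O}_L/P\mathcal{O}_L)=(X^{q^f}-X)$ is squarefree of degree $q^f$, whereas $N_{\F_q}(M_n(\F_q))=\bigl(\prod_{\deg\pi\le n}\pi^{\lfloor n/\deg\pi\rfloor}\bigr)$ equals $(X^{q}-X)$ when $n=1$ and is non-squarefree when $n\ge2$; the two ideals therefore never coincide, so $\Int_K(\mathcal{O}_L)\neq\Int_K(M_n(D))$ for all $n$, yielding (iii)$\not\Rightarrow$(iv). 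Locating the prime of residue degree $>1$ and controlling all $n$ at once through the squarefree/non-squarefree dichotomy is the crux of the argument.
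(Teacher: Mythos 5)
Your proposal is correct, and it reproduces the paper's own argument for most of the theorem while genuinely departing from it in two of the five counterexamples. The implications are handled the same way the paper handles them: (i)$\Rightarrow$(ii) by base change, (ii)$\Rightarrow$(iii) via Theorem \ref{Int_K-decomposability Theorem} (the paper invokes Theorem \ref{Int-decomp classification}, which is equivalent here), and (ii)$\Rightarrow$(iv) by localizing and applying Lemma \ref{DVR lemma}, exactly as in the paper. Your examples for (iii)$\not\Rightarrow$(ii), (iv)$\not\Rightarrow$(ii), and (iv)$\not\Rightarrow$(iii) are also the paper's, up to cosmetic changes: your $D\times D$ is the $n=1$ case of the paper's $M_n(D)\oplus M_n(D)$, and for $M_{n_1}(D)\oplus M_{n_2}(D)$ you replace the paper's citation of \cite[Thm. 2.3]{Wer} with the direct block-diagonal embedding argument, which is a nice self-contained substitute. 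The two real divergences: for (ii)$\not\Rightarrow$(i) the paper uses the quaternion algebra over $\Z_{(p)}$ (ruling out $A\cong M_2(D)$ because $A$ has no nonzero nilpotents), whereas you use $\End_D(M)$ for a non-free projective $M$ over a Dedekind domain with class group $\Z/2\Z$ (e.g.\ $\Z[\sqrt{-5}]$), ruled out by Morita theory and a determinant computation; both are valid, the paper's has the small advantage of working even over a DVR, i.e.\ the failure is already local, while yours is forced to be a global phenomenon since (ii) makes $A$ locally a matrix ring. For (iii)$\not\Rightarrow$(iv) the paper takes the same algebra $A=O_L$ but distinguishes $\Int_K(O_L)$ from $\Int_K(M_n(D))$ by quoting integral-closure results (\cite[Prop. 7]{PerWer} and \cite[Cor. 3.4]{PerWer1}, plus Corollary \ref{Int-decomp alg int} for $n=1$, which creates a forward reference in the paper), whereas your Chebotarev-plus-null-ideal argument is more elementary and self-contained: at a prime of residue degree $f>1$, $N_{\F_q}(O_L/PO_L)=(X^{q^f}-X)$ is squarefree of degree $q^f$, while $N_{\F_q}(M_n(\F_q))$ equals $(X^q-X)$ for $n=1$ and is non-squarefree for $n\geq 2$ (for the latter you do not even need your product formula: the nilpotent matrix unit $E_{12}$ has minimal polynomial $X^2$, so $X^2$ divides every null polynomial). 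Two small points of care in that argument: since $P$ need not be principal, apply the null-ideal correspondence after localizing, using $\Int_K(A)_P=\Int_K(A_P)$ for algebras of finite type (\cite[Props. 3.1, 3.2]{Wer}, which the paper also uses); and note that your use of Theorem \ref{Int-decomp classification} to get (iii) for $O_L$ likewise avoids the paper's forward reference to Corollary \ref{Int-decomp alg int}.
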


\begin{proof}

The implication $(\ref{cond1}) \Rightarrow (\ref{cond2})$ is clear, and $(\ref{cond2}) \Rightarrow (\ref{cond3})$ is Theorem \ref{Int-decomp classification}.\\

($(\ref{cond2}) \Rightarrow (\ref{cond4})$) 
Clearly, it is sufficient to prove the statement locally at each maximal ideal $P$ of $D$. Thus, we suppose that $D$ is a DVR. By Lemma \ref{DVR lemma} we have $\Int_K(A)=\Int_K(\widehat{A})$ and $\Int_K(M_n(D))=\Int_K(M_n(\widehat{D}))$, and since $\whA\cong M_n(\whD)$ the statement holds.\\

We now show by counterexamples that the other stated implications do not hold.\\

($(\ref{cond2}) \not\Rightarrow (\ref{cond1})$) Let $p$ be an odd prime of $\Z$, let $D = \Z_{(p)}$, and let $A$ be the standard quaternion algebra $A = D \oplus D\mathbf{i} \oplus D\mathbf{j} \oplus D\mathbf{k}$ (so that $\mathbf{i}^2 = \mathbf{j}^2 = -1$ and $\mathbf{i}\mathbf{j} = \mathbf{k} = -\mathbf{j}\mathbf{i}$). Then, it is well known (cf.\ \cite[Exer. 3A]{Goodearl}) that $A/p^k A \cong M_2(D/p^k D) \cong M_2(\Z/p^k \Z)$ for all $k > 0$, so that $\widehat{A}\cong M_2(\Z_p)$. However, $A \not\cong M_2(D)$, because (among other reasons) $A$ contains no nonzero nilpotent elements.\\

($(\ref{cond3}) \not\Rightarrow (\ref{cond2})$) Take $A = M_n(D) \oplus M_n(D)$. Then, $A$ is $\Int_K$-decomposable by Theorem \ref{Int-decomp classification}, but $A/PA \cong M_n(D/P) \oplus M_n(D/P)$, so (\ref{cond2}) does not hold.\\

($(\ref{cond4}) \not\Rightarrow (\ref{cond2})$) Again take $A = M_n(D) \oplus M_n(D)$. Then, (\ref{cond2}) does not hold, but $\Int_K(A) = \Int_K(M_n(D))$ by \cite[Thm. 2.3]{Wer}.\\

($ (\ref{cond3}) \not\Rightarrow (\ref{cond4})$) Let $K \subsetneqq L$ be an unramified Galois extension of number fields. Let $D = O_K$ and take $A = O_L$. Then---as we will show in Corollary \ref{Int-decomp alg int}---$A$ is $\Int_K$-decomposable. However, we argue that for all $n \in \N$ we have $\Int_K(O_L) \ne \Int_K(M_n(D))$. First, if $n = 1$, then $\Int_K(O_L) = \Int_K(D) = \Int(O_K)$; but, by Corollary \ref{Int-decomp alg int}, this is impossible because $L \ne K$. So, assume that $n >1$. Then, by \cite[Prop. 7]{PerWer} $\Int_K(O_L)$ is integrally closed (this also follows from \cite[Thm. 3.7]{LopWer}). But, $\Int_K(M_n(D))$ is never integrally closed when $n > 1$ \cite[Cor. 3.4]{PerWer1}, so $\Int_K(O_L) \ne \Int_K(M_n(D))$ for $n > 1$.\\

($(\ref{cond4}) \not\Rightarrow (\ref{cond3})$) Let $n > m \geq 1$ and take $A = M_n(D) \oplus M_m(D)$. Then, $\Int_K(M_n(D)) \subseteq \Int_K(M_m(D))$, so $\Int_K(A) = \Int_K(M_n(D))$ by \cite[Thm. 2.3]{Wer}. But, for any prime $P$ of $D$, we have $A/PA \cong M_n(D/P) \oplus M_m(D/P)$, which does not satisfy  Theorem \ref{Int-decomp classification}.
\end{proof}

%%%%%%%%%%%%%%%%%%%
%%%%%%%%%%%%%%%%%%%
%%%%%%%%%%%%%%%%%%%
\section{Extended Algebras and Maximal Orders}\label{Extended classification}
In this final section, we examine the consequences of Theorems \ref{Int-decomp classification} and \ref{Int_K-decomposability Theorem}, which allows us to give a global characterization of Int-decomposable algebras. The descriptions given in Theorems \ref{Int-decomp classification} and \ref{Int_K-decomposability Theorem} show that an $\Int$-decomposable algebra can be described---either residually or in terms of its completions---in terms of matrix rings. In the case where $D$ is the ring of integers of a number field $K$, we are able to completely classify $\Int_K$-decomposable algebras $A$ as those which are the maximal orders of the extended $K$-algebra $B$; $B$ is a separable $K$-algebra whose simple components share a common center $F$; $F$ is a finite unramified Galois extension of $K$; and each simple component of $B$ is unramified at each finite place of $F$.

The work in this section relies heavily on the theory of maximal orders, as presented in \cite{Reiner}. We begin by recalling several definitions from \cite{Reiner} and \cite{Rowen}. As in earlier sections, $D$ denotes a Dedekind domain with fraction field $K$.

\begin{Defs}\label{Noncomm defs}\mbox{}
Let $B$ be a finite dimensional $K$-algebra. 
\begin{itemize}
\item By the Wedderburn Structure Theorem \cite[Thm. 3.5]{Pierce}, if $B$ is semisimple, then we have $B = \bigoplus_{i=1}^r M_{n_i}(\mcD_i)$, for some uniquely determined $r,n_i\in\N$ and division rings $\mcD_i$;  the $B_i = M_{n_i}(\mcD_i)$ are the \emph{simple components} of $B$. We denote by $Z(\mcD_i)$ the center of $\mcD_i$, which is a finite field extension of $K$. We say that $B$ is \emph{separable} if $B$ is a finite dimensional semisimple $K$-algebra, such that the center of each simple component of $B$ is a separable field extension of $K$ \cite[p. 99]{Reiner}. 

\item A \textit{$D$-order} in $B$ is a subring $A$ of $B$ such that $A$ is a finitely generated $D$-submodule of $B$ and $K \cdot A = B$. A \textit{maximal $D$-order} in $B$ is a $D$-order that is not properly contained in any other $D$-order of $B$ (see \cite[p. 108, 110]{Reiner}). Note that in the setting of this paper, a $D$-algebra $A$ of finite type is a $D$-order in the extended $K$-algebra $B=K\otimes_D A$ (and vice versa, a $D$-order $A$ in a $K$-algebra $B$ is a $D$-algebra of finite type).

\item We say that a field extension $F/K$ is a \emph{splitting field} of the $K$-algebra $B$ if the extended $F$-algebra $B\otimes_K F$ is a direct sum of full matrix algebras over $F$, that is, $B\otimes_K F\cong\bigoplus_{i=1}^s M_{n_i}(F)$ \cite[Def. 18.30]{Rowen}. It is easy to see that if a finite dimensional $K$-algebra $B$ admits a splitting field $F$, then $B$ is semisimple, since the extended $F$-algebra $B\otimes_K F$ is semisimple (cf.\ \cite[p. 151]{Rowen}).

\item If $B=M_n(\mcD)$ is a $K$-central simple algebra, where $\mcD$ is a division algebra, we denote by $\deg(B)=\sqrt{[B:K]}$ the \emph{degree} of $B$. If $D$ is a Dedekind domain and $P\subset D$ a maximal ideal, then $\whK_P$ is a splitting field of $B$ if and only if $B$ is \emph{unramified} at $P$ in the sense of \cite[Chap. 8, \S 32]{Reiner}, that is, $\whB_P=B\otimes_K \whK_P\cong M_{nd}(\whK_P)$, where $d$ is the degree of $\mcD$.
\end{itemize}
\end{Defs}

When $D$ is the ring of integers of a number field, we will demonstrate that $\Int_K$-decomposable algebras can be completely classified in terms of these definitions. We first consider the local case where $D$ is a DVR, and then globalize this to the general case.

\subsection{Local case}
In this subsection, $D$ is a DVR with maximal ideal $P$ and finite residue field. As in Section \ref{Int-decomp Completion Section}, we denote by $\whD$ and $\whA$ the $P$-adic completions of $D$ and $A$ (and in general all completions are with respect to the $P$-adic topology).

\begin{Thm}\label{local case}
Let $A$ be a $D$-algebra of finite type with standard assumptions and let $B=A\otimes_D K$ be the extended $K$-algebra. Then $A$ is $\Int_K$-decomposable if and only if $A$ is a maximal order in $B$, $B$ is a separable $K$-algebra with simple components $B_1,\ldots,B_r$ and there exists  a finite unramified extension $\whF$ of $\whK$ and $n\in\N$ which satisfy these conditions  for each $i=1,\ldots,r$:
\begin{enumerate}[(i)]
\item $F_i\otimes_K \whK\cong\prod_{j=1}^{k_i} \whF$ for some $k_i\in\N$.
\item  $B_i\otimes_{F_i}\whF=M_n(\whF)$.
\end{enumerate} 
\end{Thm}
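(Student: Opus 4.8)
The plan is to reduce everything to the structural description of the completion $\widehat{A}$ provided by Theorem~\ref{Int_K-decomposability Theorem}, and then translate the local decomposition $\widehat{A}\cong\bigoplus_{i=1}^t M_n(\widehat{T})$ into statements about the simple components $B_i$ of $B$ and their centers $F_i$ via the theory of completions of separable algebras from~\cite{Reiner}. Since $D$ is a complete-able DVR and $A$ is a $D$-order of finite type, I would first observe that $A$ is a maximal $D$-order in $B$ if and only if $\widehat{A}$ is a maximal $\widehat{D}$-order in $\widehat{B}=B\otimes_K\widehat{K}$, using the standard correspondence between orders and their completions (e.g.\ \cite[Cor.~11.6]{Reiner}), and that $\widehat{B}\cong K\otimes_D\widehat{A}$-type identifications let me pass freely between $B$ and $\widehat{B}$.

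First I would prove the forward direction. Assume $A$ is $\Int_K$-decomposable. By Theorem~\ref{Int_K-decomposability Theorem}, $\widehat{A}\cong\bigoplus_{i=1}^t M_n(\widehat{T})$ where $\widehat{T}$ is a complete DVR whose fraction field $\widehat{F}$ is a finite unramified extension of $\widehat{K}$. Tensoring with $\widehat{K}$ gives $\widehat{B}\cong\bigoplus_{i=1}^t M_n(\widehat{F})$, which is a direct sum of matrix rings over a field; hence $\widehat{B}$ is semisimple, and since a $K$-algebra is semisimple (resp.\ separable) exactly when its completion is, $B$ is separable over $K$. Because $\widehat{T}$ is the unique maximal order in $M_n$ sense---each $M_n(\widehat{T})$ being a maximal $\widehat{D}$-order in $M_n(\widehat{F})$ by \cite[Thm.~17.3]{Reiner}---the decomposition exhibits $\widehat{A}$ as a maximal $\widehat{D}$-order, so $A$ is a maximal $D$-order in $B$. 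For conditions (i) and (ii), I would use the decomposition of the completion of a separable algebra: for each simple component $B_i$ with center $F_i$, one has $B_i\otimes_K\widehat{K}\cong\bigoplus_{\mathfrak{P}_{ij}}(B_i)_{\mathfrak{P}_{ij}}$, where $\mathfrak{P}_{ij}$ runs over the places of $F_i$ above $P$ and each local factor is a central simple algebra over the completion $\widehat{F_{i,\mathfrak{P}_{ij}}}$ \cite[Thm.~32.15]{Reiner}. Matching this against the uniform shape $\bigoplus M_n(\widehat{F})$ forces every local completion of every $F_i$ to equal the single unramified field $\widehat{F}$ (giving (i) with $F_i\otimes_K\widehat{K}\cong\prod_j\widehat{F}$), and forces each $(B_i)_{\mathfrak{P}_{ij}}\cong M_n(\widehat{F})$, i.e.\ $B_i\otimes_{F_i}\widehat{F}\cong M_n(\widehat{F})$ with a common matrix size $n$, which is (ii).

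For the converse, I would assume (i) and (ii) together with separability and maximality and reconstruct the decomposition of Theorem~\ref{Int_K-decomposability Theorem}. Using (i), each $B_i\otimes_K\widehat{K}\cong\bigoplus_{j=1}^{k_i}(B_i\otimes_{F_i}\widehat{F})$, and by (ii) each summand is $M_n(\widehat{F})$; summing over $i$ shows $\widehat{B}\cong\bigoplus M_n(\widehat{F})$ with a uniform $n$ and a single unramified local field $\widehat{F}$. Since $A$ is a maximal $D$-order, $\widehat{A}$ is a maximal $\widehat{D}$-order in $\widehat{B}$, and the unique maximal order in $M_n(\widehat{F})$ is $M_n(\widehat{T})$ where $\widehat{T}$ is the valuation ring of $\widehat{F}$ \cite[Thm.~17.3]{Reiner}; maximal orders in direct sums decompose as direct sums of maximal orders in the components, so $\widehat{A}\cong\bigoplus M_n(\widehat{T})$, which is exactly the hypothesis of Theorem~\ref{Int_K-decomposability Theorem} and yields that $A$ is $\Int_K$-decomposable.

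The main obstacle I anticipate is the bookkeeping in the forward direction: extracting from the single isomorphism $\widehat{A}\cong\bigoplus_{i=1}^t M_n(\widehat{T})$ the \emph{per-component} statements (i) and (ii) with a genuinely common $n$ and common $\widehat{F}$. This requires the uniqueness of the Wedderburn decomposition of $\widehat{B}$ and a careful identification of how the simple components of $B$ distribute among the $t$ matrix blocks of $\widehat{A}$, together with the fact that distinct places $\mathfrak{P}_{ij}$ of a fixed $F_i$ contribute distinct blocks all of the same shape. Ensuring that the centers $F_i$ each have \emph{all} their completions at $P$ equal to the same field $\widehat{F}$---rather than merely some unramified extension---is the delicate point, and it is precisely here that the homogeneity of the decomposition in Theorem~\ref{Int_K-decomposability Theorem} (a single $n$ and a single $\widehat{T}$) does the essential work.
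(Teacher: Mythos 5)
Your proposal is correct and takes essentially the same route as the paper's own proof: both directions rest on Theorem~\ref{Int_K-decomposability Theorem}, the correspondence between maximal orders and their completions, the decomposition of maximal orders over direct sums together with conjugacy to $M_n(\whT)$ (\cite[Thms.~10.5, 11.5, 17.3]{Reiner}), and a comparison of two computations of $\whB$ using Wedderburn uniqueness to force the common $n$ and the common field $\whF$. The only differences are cosmetic citation choices (e.g.\ you invoke the splitting of $B_i\otimes_K\whK$ over the places of $F_i$ directly from Reiner, where the paper uses \cite[Prop.~6.1]{Nark} and \cite[Cor.~7.8]{Reiner}).
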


Note that, by \cite[Chap. 6, Prop. 6.1]{Nark} the above condition (i) is equivalent to the following: for each $i=1,\ldots,r$, all the prime ideals in the integral closure $D_{F_i}$ of $D$ in $F_i$ (which necessarily lie above $P$) are unramified and have the same residue field degree, equal to $[\whF:\whK]$ (which is independent of $i$). In particular, $\whF$ is the completion of $F_i$ at each prime ideal of $D_{F_i}$. The second condition says that $\whF$ is a splitting field of each simple component $B_i$, that is, $B_i$ is unramified at each finite place of its center $F_i$ and the degree of each simple component $B_i$ as a $F_i$-central simple algebra is constant, independent of $i$. So we can say that $P$ is unramified in each $B_i$.

\begin{proof}
$(\Leftarrow)$ Assume the above conditions on $A,B$ are satisfied. Since $B$ is semisimple,  we have
\begin{equation}\label{B}
B=\bigoplus_{i=1}^r M_{n_i}(\mcD_i)
\end{equation}
for some $r,n_i\in\N$ and division rings $\mcD_i$.  We denote by $B_i$ the simple component $M_{n_i}(\mcD_i)$ of $B$ and by  $F_i$ the center of $\mcD_i$, for $i=1,\ldots,r$.

Note that for each $i=1,\ldots,r$ by condition (i) we have
\begin{align}\label{completion simple components}
B_i \otimes_K \whK = (B_i \otimes_{F_i} F_i) \otimes_K \whK &= B_i\otimes_{F_i} (F_i \otimes_K \whK)\nonumber\\
&= B_i \otimes_{F_i} (\prod_{j=1}^{k_i} \whF) = \prod_{j=1}^{k_i} (B_i \otimes_{F_i} \whF)
\end{align}
so by condition (ii) we conclude that $B_i\otimes_K \whK=\prod_{j=1}^{k_i}M_n(\whF)$. Hence, the $P$-adic completion of $B$ is:
\begin{equation}\label{completion of B 1}
\whB \cong \bigoplus_{i=1}^r (B_i\otimes_K \whK) = \bigoplus_{i=1}^r \bigoplus_{j=1}^{k_i}M_n(\whF) =\bigoplus_{h=1}^t M_n(\whF).
\end{equation}

Finally, since $B$ is a separable $K$-algebra and $A$ is a maximal $D$-order in $B$, it follows by \cite[Thm. 11.5]{Reiner} that $\whA$ is a maximal $\whD$-order in $\whB$. Moreover, by \cite[Thm. 10.5]{Reiner}, $A$ decomposes as  $A=\bigoplus_{i=1}^r A_i$, where $A_i$ is a maximal $D$-order in $B_i$, for all $i=1,\ldots,r$. Similarly, $\whA$ decomposes as $\whA=\bigoplus_{i=1}^t \whA_i$, where each $\whA_i$ is a maximal $\whD$-order in the simple component $M_n(\whF)$ of $\whB$. By \cite[Thm. 17.3]{Reiner}, each $\whA_i$ is conjugated by a unit of $M_n(\whF)$ to the maximal $\whD$-order $M_n(\whT)$, where $\whT$ is the DVR of the local field $\whF$. In particular, each maximal $\whD$-order  $\whA_i$ is isomorphic to $M_n(\whT)$, so by Theorem \ref{Int_K-decomposability Theorem} $A$ is $\Int_K$-decomposable.

$(\Rightarrow)$ Assume that $A$ is $\Int_K$-decomposable. By Theorem \ref{Int_K-decomposability Theorem} we have that $\whA = A \otimes_D \whD \cong \bigoplus_{i=1}^t M_n(\whT)$ for some $n,t\in\N$ and a finite unramified extension $\whT$ of $\whD$. Then,
\begin{equation}\label{completion of B}
\whB=B\otimes_K \whK=(A\otimes_D K)\otimes_D \whD=\whA\otimes_D K\cong\bigoplus_{h=1}^t M_n(\whF)
\end{equation} 
(note that $\whT\otimes_D K=\whF $). Therefore, $\whB$ is a $\whK$-semisimple algebra with center equal to $Z(\whB)\cong\bigoplus_{i=1}^t \whF$. Since an unramified extension of a local field is separable \cite[Thm 5.26]{Nark}, $\whB$ is a separable $\whK$-algebra. Moreover, each component $M_n(\whT)$ of $\whA$ is a maximal $\whD$-order in the respective simple component $\whB_i=M_n(\whF)$ of $\whB$  (see \cite[Thm. 8.7]{Reiner}), so, by \cite[Thm. 10.5]{Reiner}, $\whA$ is a maximal $\whD$-order in $\whB$. By \cite[Thm. 11.5]{Reiner}, $A$ is a maximal $D$-order in $B$.

Now, by \cite[Prop. 10.6b]{Pierce} $B$ is $K$-separable, hence semisimple. Therefore, $B$ decomposes as a finite direct sum of matrix algebras $B_i=M_{n_i}(\mcD_i)$ as in (\ref{B}) for some $r,n_i\in\N$ and division rings $\mcD_i$ whose centers $F_i=Z(\mcD_i)$ are finite separable field extensions of $K$.  

By \cite[Prop. 6.1]{Nark}, for each $i=1,\ldots,r$, $F_i\otimes_K \whK=\prod_{j=1}^{k_i}\whF_{ij}$, where $\whF_{ij}$ is a finite separable extension of $\whK$ for all $i,j$ (the $\whF_{ij}$ are the completions of $F_i$ at the different prime ideals of $D_{F_i}$ which lie above $P$). Moreover, $B_i\otimes_{F_i}\whF_{ij}$ is a central simple algebra over $\whF_{ij}$, say equal to $M_{m_{ij}}(\widehat{\mcD}_{ij})$, where $\widehat{\mcD}_{ij}$ is a central division algebra over $\whF_{ij}$ \cite[Cor. 7.8]{Reiner}. As in (\ref{completion simple components}) we get
\begin{equation}\label{completion of B 2}
\whB \cong \bigoplus_{i=1}^r (B_i\otimes_K \whK) = \bigoplus_{i=1}^r \bigoplus_{j=1}^{k_i}(B_i\otimes_{F_i}\whF_{ij})=\bigoplus_{i=1}^r \bigoplus_{j=1}^{k_i}M_{m_{ij}}(\widehat{\mcD}_{ij}).
\end{equation}
By comparing (\ref{completion of B}) and (\ref{completion of B 2}) and applying the Wedderburn Structure Theorem, we deduce that $m_{ij}=n$, $\widehat{\mcD}_{ij}=\whF$ for all $i$ and $j$, and $\sum_{i=1}^r k_{i}=t$. This forces $\whF_{ij}=\whF$ for all $i$ and $j$, so that condition (i) is satisfied. Also, $B_i\otimes_{F_i}\whF_{ij}=B_i\otimes_{F_i}\whF=M_n(\whF)$, so that condition (ii) is satisfied, too.
\end{proof}

\begin{Rem} Assume $A$ is $\Int_K$-decomposable, so $B$ decomposes as in (\ref{B}). For each $i=1,\ldots,r$, let $D_{F_i}$ be the integral closure of $D$ in the center $F_i$ of the simple component $B_i$ of $B$ and consider it as a (commutative) $D$-algebra. Then the theorem shows (via condition (ii)) that the $D_{F_i}$ are $\Int_K$-decomposable (according to Theorem \ref{Int-decomp classification}). It also shows that each component $A_i$ of $A$ is $\Int_K$-decomposable and also $\Int_{F_i}$-decomposable as well.

Note that while the degree of the $B_i$'s as $F_i$-central simple algebras is the same for all $i$, it is not necessarily true that the dimension of the $B_i$'s over $K$ is the same for all $i$. The point is that the centers $F_i$'s may be different from each other (and, in particular, have different degree over $K$). For example, let $D=\Z_{(p)}$ where $p$ is an odd prime, let $A_1$ be the standard quaternion algebra $A_1 = D \oplus D\mathbf{i} \oplus D\mathbf{j} \oplus D\mathbf{k}$ (so that $\mathbf{i}^2 = \mathbf{j}^2 = -1$ and $\mathbf{i}\mathbf{j} = \mathbf{k} = -\mathbf{j}\mathbf{i}$). Then, $B_1=\Q \oplus \Q\mathbf{i} \oplus \Q\mathbf{j} \oplus \Q\mathbf{k}$, so $n_1=1$ and $m_1=2$. Let $F/\Q$ be a quadratic field extension in which $p$ splits completely and let $D_{F,p}$ be the integral closure of $\Z_{(p)}$ in $F$ (so $D_{F,p}/pD_{F,p}\cong\Z/p\Z\times\Z/p\Z$). Let $A_2=M_2(O_{F,p})$, so $B_2=M_2(F)$, $n_2=2$, and $m_2=1$. Then $A=A_1\oplus A_2$ is $\Int_K$-decomposable. Note that $n_1 m_1=n_2 m_2=2$ and $B_1$ and $B_2$ have different dimension over $\Q$: $[B_1:\Q]=4$ but $[B_2:\Q]=8$.

In the global case where $D$ is the ring of integers of a number field, which will be treated in the next subsection, we will see that if condition (ii) of Theorem \ref{local case} holds at each maximal ideal of $D$, then the simple components of the separable $K$-algebra $B$ have all the same center $F$, which is an unramified Galois extension of $K$.
\end{Rem}

\begin{Rem} Let $A$ be an $\Int_K$-decomposable algebra, as in the statement of Theorem \ref{local case}. Since the finite unramified extension $\whF$ of $\whK$ of condition (ii) of the statement is a Galois extension, from (\ref{completion of B 1}) it is easy to see that $\whF$ is a splitting field of the $K$-algebra $B$.  However, in general the converse does not hold, that is, a finite unramified extension $\whF$ of $\whK$ can be a splitting field of $B$ without $A$ being $\Int_K$-decomposable. For example, let $F$ be a finite field extension of $K$ such that the maximal ideal $P$ of $D$ is unramified in $D_F$ and the  prime ideals above $P$ have different residue field degree. Then, by Theorem \ref{Int-decomp classification} the $D$-algebra $A=D_F$ is not $\Int_K$-decomposable. Let $\whF$ be a finite unramified extension of $\whK$ containing all the completions of $A$ at the different prime ideals above $P$. Then $F\otimes_K \whF=(F\otimes_K \whK)\otimes_{\whK} \whF=\prod_{j}\widehat{F}_j\otimes_{\whK}\whF=\prod_j \whF$ (notice that $\widehat{F}_j\otimes_{\whK}\whF$ is equal to a direct product of copies of $\whF$), so $\whF$ is a splitting field of $B=F$.
\end{Rem}

\subsection{Global case}

We now establish a global variant of Theorem \ref{local case}. In this final subsection, we assume that $D$ is the ring of integers of a number field $K$. This enables us to use some of the powerful tools of algebraic number theory, such as the Tchebotarev Density Theorem and the Hasse-Brauer-Noether-Albert Theorem. As usual, $A$ is a $D$-algebra of finite type with standard assumptions.

\begin{Thm}\label{Int-decomposable algebras over number fields}
Let $K$ be a number field with ring of integers $D$. Let $A$ be a $D$-algebra of finite type with standard assumptions and let $B=A\otimes_D K$ be the extended $K$-algebra.  Then, $A$ is $\Int_K$-decomposable if and only if $A$ is a maximal order in $B$ and $B$ is a separable $K$-algebra with simple components $B_1, \ldots, B_r$ such that the following hold:
\begin{enumerate}[(i)]
\item the $B_i$ share a common center $F$.
\item $F$ is a finite unramified Galois field extension of $K$.
\item for each $i$, $B_i$ is unramified at each finite place of $F$.
\item the degree of $B_i$ as an $F$-central simple algebra is the same for each $i$.
\end{enumerate}
\end{Thm}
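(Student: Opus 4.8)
The plan is to reduce the global statement to the local Theorem~\ref{local case}, which I may use freely, by applying the local-to-global principle for $\Int_K$-decomposability (Proposition~\ref{IntKdeclocal}) and invoking number-theoretic tools to promote the purely local data into the global structural conditions (i)--(iv). The forward direction should be the easier one: if $A$ is $\Int_K$-decomposable, then $A_P$ is $\Int_K$-decomposable for every prime $P$ by Proposition~\ref{IntKdeclocal}, so Theorem~\ref{local case} applies at each $P$. This immediately gives that $A$ is a maximal $D$-order in $B$ (maximality is a local property, by \cite[Thm.~11.5 and its converse]{Reiner}) and that $B$ is separable. It also tells me that, at each $P$, every simple component $B_i$ satisfies $F_i \otimes_K \whK_P \cong \prod_j \whF_P$ with all completions equal and all local degrees $[B_i \otimes_{F_i} \whF_P : \whF_P]$ equal to a common $n_P^2$.

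The hard part will be globalizing this local uniformity into conditions (i)--(iv), and here I expect the main obstacle to lie in proving (i), that all the $B_i$ share a \emph{common} center $F$. Local data only says that at each fixed $P$, the various $F_i$ have the same local behavior (common residue degree, unramified); it does not obviously force the global fields $F_i$ to coincide. The natural tool is the Tchebotarev Density Theorem: if $F_i$ and $F_{i'}$ were distinct, one could find a positive-density set of primes $P$ of $K$ whose splitting behavior in the two fields differs, contradicting the fact that at \emph{every} $P$ the induced completion $\whF_P$ (and hence the residue degree and splitting type) must agree across all components. More precisely, the constraint from the local theorem that all $F_i\otimes_K\whK_P$ decompose into copies of a single $\whF_P$, for \emph{every} $P$, forces each $F_i/K$ to be unramified everywhere and to have the same Artin/Frobenius data at each prime, which by Tchebotarev identifies the fields; unramified-everywhere together with the common-splitting condition then yields that $F := F_i$ is Galois over $K$, giving (i) and (ii) simultaneously. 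Condition (iii) is then a restatement that each $B_i$ is unramified at every finite place of $F$, which follows because unramifiedness at $P$ in the sense of Definition~\ref{Noncomm defs} holds for all $P$ by the local theorem; (iv) follows from the constancy of the local degree $n$ across components, which is built into the conclusion of Theorem~\ref{local case}.

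For the converse, I would assume (i)--(iv) together with $A$ a maximal order and $B$ separable, fix an arbitrary prime $P$ of $D$, and verify the hypotheses of Theorem~\ref{local case} at $P$. Since $F/K$ is Galois and unramified, all primes of $F$ above $P$ have a common residue degree $f_P$, so $F\otimes_K\whK_P\cong\prod_j\whF_P$ with a single completion $\whF_P$ (this is exactly \cite[Prop.~6.1]{Nark} specialized to the Galois unramified case); this gives condition (i) of the local theorem with the common center $F$. Condition (iii) of the present theorem says each $B_i$ is unramified at the places of $F$ above $P$, which is precisely $B_i\otimes_{F_i}\whF_P\cong M_n(\whF_P)$, giving condition (ii) of the local theorem, where the common degree $n$ comes from (iv). Having checked the local hypotheses at every $P$, Theorem~\ref{local case} shows each $A_P$ is $\Int_K$-decomposable, and Proposition~\ref{IntKdeclocal} then concludes that $A$ itself is $\Int_K$-decomposable. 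The one subtlety to handle carefully is the passage between ``unramified Galois'' and the uniform-splitting statement $F\otimes_K\whK_P\cong\prod_j\whF_P$: one must note that it is precisely the Galois hypothesis that makes the local degrees equal across all primes above $P$, which is what matches the single-completion requirement of the local theorem.
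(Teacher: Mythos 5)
Your overall architecture is exactly the paper's: both directions reduce to Theorem~\ref{local case} via the locality of $\Int_K$-decomposability (Proposition~\ref{IntKdeclocal}); maximality is passed between $A$ and the $A_P$ via \cite[Cor.~11.2]{Reiner} (note that it is that corollary, not Theorem~11.5, which is the localization statement --- Theorem~11.5 concerns completions); and the centers are globalized with Tchebotarev. Your converse direction is sound and is essentially the paper's argument, just packaged as ``verify the hypotheses of the local theorem at every $P$'' rather than re-running the maximal-order computation against the completion criterion (Theorem~\ref{Int_K-decomposability Theorem}); the two are the same calculation.

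There is, however, a genuine flaw in the \emph{order} of your Tchebotarev argument in the forward direction. You propose first to ``identify the fields'' $F_i$ from the fact that they have the same Frobenius/splitting data at every prime, and only afterwards to deduce that the common field $F$ is Galois. That order cannot work: for non-Galois extensions, splitting data at all primes does not determine the field up to isomorphism (there exist arithmetically equivalent, non-isomorphic number fields, which have identical splitting types at every prime), and ``Frobenius data'' is not even well-defined before Galois-ness is established. The correct order --- which is the paper's --- is the reverse. First, condition (i) of Theorem~\ref{local case} gives that for every prime $P$ of $K$ and every $i$, all primes of $D_{F_i}$ above $P$ are unramified with the \emph{same} residue degree $[\whF_P : \whK_P]$; by the Tchebotarev density theorem this uniform splitting behavior forces each $F_i/K$ to be Galois (\cite[Cor.~VII.13.8]{Neu}). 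Second, once the $F_i$ are known to be Galois, each is determined by its set of completely split primes (\cite[Cor.~VII.13.10]{Neu}); since for every $i$ that set equals $\{P : [\whF_P : \whK_P] = 1\}$, which is independent of $i$, all the $F_i$ coincide. With this reordering your proof closes the gap and agrees with the paper's.
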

\begin{proof}
$(\Leftarrow)$ Assume the above conditions on $A$ and $B$ are satisfied. Let $P$ be a fixed maximal ideal of $D$. Then, by conditions (ii) and (iii) and \cite[Prop. 6.1]{Nark} we have $F\otimes_K \whK=\prod_{j=1}^{k}\whF_P$, for some $k\in\N$, where $\whF_P$ is a finite unramified extension of $\whK_P$; note that $\whF_P$ is the completion of $F$ at any prime ideal which lies above $P$. Therefore, by (\ref{completion simple components}) and conditions (i) and (iv) we have
\begin{equation*}
\whB_P=B\otimes_K \whK_P=\bigoplus_{i=1}^r (B_i\otimes_K \whK_P)=\bigoplus_{i=1}^r \bigoplus_{j=1}^{k}(B_i\otimes_{F}\whF_P)=\bigoplus_{h=1}^t M_n(\whF_P)
\end{equation*}
Since $A$ is a maximal $D$-order in $B$, by \cite[Cor. 11.2 \& Thm. 11.5]{Reiner} $\whA_P$ is a maximal $\whD_P$-order in $\whB_P$, so by \cite[Thm. 10.5]{Reiner} $\whA_P$ decomposes as $\whA_P=\bigoplus_{h=1}^t \whA_h$, where each $\whA_h$ is a maximal $\whD_P$-order in $M_n(\whF_P)$. If $\whT_P$ is the valuation ring of $\whF_P$, then by \cite[Thm. 17.3]{Reiner} $\whA_h$ is a conjugate of (hence isomorphic to) $M_n(\whT_P)$, for each $h=1,\ldots,t$. Since $P$ was an arbitrary maximal ideal of $D$, by Theorem \ref{Int_K-decomposability Theorem} it follows that $A$ is $\Int_K$-decomposable.

$(\Rightarrow)$ Assume now that $A$ is $\Int_K$-decomposable. Since $\Int_K$-decomposability is a local property (Proposition \ref{IntKdeclocal}), for each maximal ideal $P$ of $D$, $A_P=A\otimes_D D_P$ is $\Int_K$-decomposable, so we may apply Theorem \ref{local case} to $A_P$. Thus, $A_P$ is a maximal $D_P$-order in $B$, and $B$ is a separable $K$-algebra and decomposes as $B=\bigoplus_{i=1}^r B_i$ with simple components $B_i$ with centers $F_i$ which are finite separable field extensions of $K$, for $i=1,\ldots,r$. Moreover, there exists $n\in\N$ such that $[B_i:F_i]=n^2$, for each $i=1,\ldots,r$, and hence (iv) holds. Note that $r$ is independent of the particular maximal ideal $P$ of $D$. Moreover, since $A$ is locally a maximal $D_P$-order in $B$, $A$ is a maximal $D$-order in $B$ (\cite[Cor. 11.2]{Reiner}).

For each prime ideal $P$ of $D$, by condition (i) of Theorem \ref{local case} there exists a finite unramified extension $\whF_P$ of $\whK_P$ such that, for each $i=1,\ldots,r$, $\whF_P$ is the completion of $F_i$ at any prime ideal $Q$ of the ring of integers $D_{F_i}$ which lies over $P$. Furthermore, by condition (ii) of Theorem \ref{local case}  $\whF_P$ is a splitting field of the $F_i$-central simple algebra $B_i$. These facts imply that all the field extensions $F_1,\ldots,F_r$ are unramified over $K$ and by the Tchebotarev Density Theorem they also are Galois extensions (see \cite[Cor. VII.13.8]{Neu}). Moreover, since a finite Galois extension $F$ of $K$ is completely determined by  the set of prime ideals of $K$ which split completely in $F$ (again by the same theorem of Tchebotarev, see \cite[Cor. VII.13.10]{Neu}), it follows that $F_1,\ldots,F_r$ are all equal to the same finite unramified Galois extension $F$. All of this proves conditions (i), (ii), and (iii).
\end{proof}

We close this paper with two corollaries. In the first one we specialize Theorems \ref{Int-decomp classification} and \ref{When Int_K(A) = Int(D)} to the case where $D$ and $A$ are rings of integers in number fields, which results in a very clean description of $\Int_K$-decomposable algebras. In the second corollary, we show that over $\Q$, an $\Int_{\Q}$-decomposable algebra $A$ must be isomorphic to a finite direct sum of copies of $M_n(\Z)$, for some $n\in\N$. This corollary also demonstrates that---with our usual assumptions in place---a matrix algebra over $\Z$ can be recognized by its residues and completions.

\begin{Cor}\label{Int-decomp alg int}
Let $K\subseteq L$ be number fields with rings of integers $O_K$ and $O_L$, respectively. Consider $O_L$ as an $O_K$-algebra. Then
\begin{enumerate}[(1)]
\item $O_L$ is $\Int_K$-decomposable if and only if $L/K$ is an unramified Galois extension.
\item $\Int_K(O_L) = \Int(O_K)$ if and only if $L = K$.
\end{enumerate}
\end{Cor}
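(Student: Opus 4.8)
The plan is to specialize the global classification theorems to the commutative case $A = O_L$, where the extended algebra $B = O_L \otimes_{O_K} K$ is simply the field $L$.

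For part (1), I would first observe that $B = L$ is a field, hence a commutative separable $K$-algebra (separability being automatic in characteristic zero) consisting of a single simple component $B_1 = L$ whose center is $F = L$ and whose degree as an $F$-central simple algebra is $1$. Since $O_L$ is the integral closure of $O_K$ in $L$, it is the unique maximal $O_K$-order in $L$, so the maximal-order hypothesis of Theorem \ref{Int-decomposable algebras over number fields} is automatic. With a single commutative simple component, conditions (i), (iii), and (iv) of that theorem hold trivially, and condition (ii) reads exactly that $F = L$ is a finite unramified Galois extension of $K$. Thus Theorem \ref{Int-decomposable algebras over number fields} gives that $O_L$ is $\Int_K$-decomposable if and only if $L/K$ is unramified Galois. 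Alternatively, one may argue directly from the commutative case of Theorem \ref{Int-decomp classification}: $O_L$ is $\Int_K$-decomposable iff for every prime $P$ of $O_K$ one has $O_L/PO_L \cong \bigoplus_i \F_q$; this forces $P$ to be unramified and all primes of $O_L$ above $P$ to have the same residue degree, and the passage from this to the Galois property is precisely the Tchebotarev argument already carried out in the proof of Theorem \ref{Int-decomposable algebras over number fields}.

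For part (2), I would apply Theorem \ref{When Int_K(A) = Int(D)} with $D = O_K$ and $A = O_L$: the equality $\Int_K(O_L) = \Int(O_K)$ holds iff $O_L/PO_L \cong \bigoplus_{i=1}^t O_K/P$ for every nonzero prime $P$ of $O_K$. Since $O_L$ is a finitely generated torsion-free, hence locally free, $O_K$-module of rank $[L:K]$, we have $\dim_{O_K/P}(O_L/PO_L) = [L:K]$, so the displayed isomorphism says precisely that each $P$ is unramified, splits into exactly $t = [L:K]$ distinct primes, and each of these primes has residue degree $1$; that is, every prime of $O_K$ splits completely in $L$. By the Tchebotarev Density Theorem the set of primes of $K$ that split completely in $L$ has density $1/[\tilde L : K]$, where $\tilde L$ is the Galois closure of $L/K$; if this density equals $1$ then $\tilde L = K$ and hence $L = K$. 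The converse is trivial. One could instead note that $\Int_K(O_L) = \Int(O_K)$ forces $\Int_K$-decomposability by Proposition \ref{IntKA=IntD implies A Int-decomposable}, so part (1) already yields that $L/K$ is unramified Galois, after which the additional requirement that all residue degrees be $1$ collapses the extension to $L = K$.

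The main obstacle, in both parts, is the implication that a \emph{uniform} splitting behavior at \emph{every} prime forces a global field-theoretic conclusion (Galois in part (1), trivial extension in part (2)). This is genuinely a density statement: equality of residue degrees at all unramified primes, or complete splitting at all primes, cannot be detected one prime at a time and requires the Tchebotarev Density Theorem to realize every Frobenius class. Fortunately this analytic input has already been absorbed into Theorem \ref{Int-decomposable algebras over number fields}, so for the corollary the remaining work reduces to correctly translating the module-theoretic conditions $O_L/PO_L \cong \bigoplus_i \F_q$ and $O_L/PO_L \cong \bigoplus_i O_K/P$ into the language of ramification and complete splitting.
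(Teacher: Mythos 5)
Your proposal is correct and takes essentially the same approach as the paper: part (1) is exactly the specialization of Theorem \ref{Int-decomposable algebras over number fields} to the single commutative simple component $B = L$ (the paper's entire proof of (1) is the citation of that theorem), and part (2) starts, as the paper does, from Theorem \ref{When Int_K(A) = Int(D)} and the translation of $O_L/PO_L \cong \bigoplus_i O_K/P$ into complete splitting of every prime. The only cosmetic difference is in the final step of (2): your primary route applies Tchebotarev to the Galois closure $\tilde L$ directly, while the paper first deduces via part (1) that $L/K$ is unramified Galois with all inertial degrees equal to $1$ and then cites \cite[Thm. VI.3.8]{Neu} to collapse the extension --- which is precisely the alternative argument you sketch at the end.
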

\begin{proof}
(1) This follows from Theorem \ref{Int-decomposable algebras over number fields}.

(2) Clearly, $L = K$ implies that $\Int_K(O_L) = \Int(O_K)$. So, assume that $\Int_K(O_L) = \Int(O_K)$. By Theorem \ref{When Int_K(A) = Int(D)}, $O_L / PO_L \cong \bigoplus_i O_K/P$ for each nonzero prime $P$ of $O_K$. From this, we see that $O_L$ is $\Int_K$-decomposable as an $O_K$-algebra, and moreover that $f(Q|P) = 1$ for each $P$ and each $Q$ above $P$ (note that the same conclusion can be obtained from Theorem \ref{Int_K(A) = Int(D) completions}). By (1), $L$ is an unramified Galois extension of $K$ with all inertial degrees equal to 1, and so $L = K$ by \cite[Thm. VI.3.8]{Neu}.
\end{proof}

\begin{Cor}\label{IntQ decomposable algebras}
Let $A$ be a $\Z$-algebra of finite type with standard assumptions. 
The following are equivalent.
\begin{enumerate}[(1)]
\item $A$ is $\Int_\Q$-decomposable
\item There exist $n, r \in \N$ such that $A \cong \bigoplus_{i=1}^r M_n(\Z)$
\item For all primes $p$, there exist $n, r \in \N$ such that $A/pA \cong \bigoplus_{i=1}^r M_n(\Z/p\Z)$
\item For all primes $p$, there exist $n, r \in \N$ such that $\whA_p \cong \bigoplus_{i=1}^r M_n(\Z_p)$
\end{enumerate}
In particular, if $A$ is $\Int_\Q$-decomposable, then $\Int_\Q(A) = \Int_\Q(M_n(\Z))$ for some $n$; and if $\Int_\Q(A) = \Int(\Z)$, then $A \cong \bigoplus_{i=1}^r \Z$.
\end{Cor}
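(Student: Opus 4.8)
The plan is to establish the cycle of implications $(2)\Rightarrow(3)\Rightarrow(4)\Rightarrow(1)\Rightarrow(2)$, leaning on the specialization of the earlier general theorems to the case $D=\Z$, where the arithmetic becomes especially rigid. The implications $(2)\Rightarrow(3)$ and $(3)\Leftrightarrow(4)$ are essentially formal: reducing a direct sum of matrix rings modulo $p$ and passing to $p$-adic completions commutes with direct sums and with the $M_n(-)$ construction, so these follow from $\whA_p=\varprojlim A/p^kA$ together with the fact that $A/pA\cong\whA_p/p\whA_p$. The implication $(3)\Rightarrow(1)$ is exactly Theorem \ref{Int-decomp classification} (with $\F_q=\F_p$ forced, as I explain below), and $(1)\Rightarrow(3)$ is the same theorem in the other direction. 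Thus the genuine content is showing that the decomposable condition over $\Z$ forces the residue field to be the prime field $\F_p$ and the completion to be $\Z_p$ itself, and then globalizing from the local matrix-ring data back to an honest isomorphism $A\cong\bigoplus_i M_n(\Z)$.

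First I would derive the local structure. By Theorem \ref{Int_K-decomposability Theorem}, if $A$ is $\Int_\Q$-decomposable then for each prime $p$ we have $\whA_p\cong\bigoplus_{i=1}^t M_n(\whT_p)$, where $\whT_p$ is a complete DVR whose fraction field is a finite \emph{unramified} extension of $\Q_p$. The key local observation is that $\Z$ has a unique unramified extension of each degree inside a fixed algebraic closure, but more to the point, I would apply Theorem \ref{Int-decomposable algebras over number fields} to conclude that $B=\Q\otimes_\Z A$ is separable with simple components sharing a common center $F$ that is a finite unramified Galois extension of $\Q$, each component unramified at every finite place of $F$. Now invoke Minkowski's theorem: $\Q$ has no nontrivial unramified extension (the discriminant of any number field $\ne\Q$ exceeds $1$), so $F=\Q$. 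Consequently every simple component $B_i$ has center $\Q$ and is unramified at every finite place. By the Hasse-Brauer-Noether-Albert theorem (and the vanishing of local invariants at the infinite places forced by unramifiedness everywhere finite, together with the sum-of-invariants-is-zero relation), each $B_i$ must split, i.e.\ $B_i\cong M_{n_i}(\Q)$; and condition (iv) of Theorem \ref{Int-decomposable algebras over number fields} forces a common degree, so all $B_i\cong M_n(\Q)$ for a single $n$. Hence $B\cong\bigoplus_{i=1}^r M_n(\Q)$ and $\whT_p=\Z_p$, $\whA_p\cong\bigoplus_i M_n(\Z_p)$, which already yields $(1)\Rightarrow(4)$.

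It remains to upgrade the local/completion data to the global isomorphism $A\cong\bigoplus_{i=1}^r M_n(\Z)$, which is $(4)\Rightarrow(2)$ and the crux of the corollary. Since $A$ is $\Int_\Q$-decomposable it is a maximal $\Z$-order in $B\cong\bigoplus_{i=1}^r M_n(\Q)$ by Theorem \ref{Int-decomposable algebras over number fields}, and by \cite[Thm. 10.5]{Reiner} $A$ decomposes compatibly as $A\cong\bigoplus_{i=1}^r A_i$ with each $A_i$ a maximal $\Z$-order in $M_n(\Q)$. The final step is the classification of maximal orders in $M_n(\Q)$: since $\Z$ is a PID, every maximal $\Z$-order in $M_n(\Q)$ is conjugate to $M_n(\Z)$ (this is the global form of \cite[Thm. 17.3]{Reiner}, using that all left ideals of $M_n(\Z)$ are free because $\Z$ has trivial class group), so each $A_i\cong M_n(\Z)$. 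I expect this last invocation to be the main obstacle, not because the statement is hard but because it requires correctly citing the PID/trivial-class-group hypothesis that distinguishes $\Z$ from a general Dedekind domain; over a ring with nontrivial class group the maximal orders in $M_n(K)$ need not all be isomorphic, so this is precisely where the base ring $\Z$ is essential. The two trailing remarks then follow immediately: $\Int_\Q(A)=\Int_\Q(M_n(\Z))$ by \cite[Thm. 2.3]{Wer} applied to the direct sum, and taking $n=1$ in the case $\Int_\Q(A)=\Int(\Z)$ (via Theorem \ref{When Int_K(A) = Int(D)} or Theorem \ref{Int_K(A) = Int(D) completions}) forces $A\cong\bigoplus_{i=1}^r\Z$.
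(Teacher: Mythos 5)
Your proposal is correct and takes essentially the same route as the paper: the substance is the equivalence $(1)\Leftrightarrow(2)$, proved via Theorem \ref{Int-decomposable algebras over number fields} together with the nonexistence of a proper unramified extension of $\Q$, the Hasse--Brauer--Noether--Albert theorem forcing each simple component to split, and the fact that maximal $\Z$-orders in $M_n(\Q)$ are conjugate to $M_n(\Z)$ (exactly where the PID hypothesis enters, as you note), with $(3)$ and $(4)$ handled through Theorems \ref{Int-decomp classification} and \ref{Int_K-decomposability Theorem}. The one imprecision is calling $(3)\Rightarrow(4)$ ``essentially formal''---only $(4)\Rightarrow(3)$ is formal, since lifting the residual matrix-ring structure to the completion requires the classification theorems---but this creates no gap because your argument in any case establishes $(3)\Rightarrow(1)\Rightarrow(4)$, so the cycle still closes.
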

\begin{proof}
It suffices to prove (1) $\Leftrightarrow$ (2). The other equivalences follow by Theorems \ref{Int-decomp classification} and \ref{Int_K-decomposability Theorem}.

By Theorem \ref{Int-decomposable algebras over number fields}, if $A$ is isomorphic to a finite direct sum of copies of $M_n(\Z)$, then $A$ is $\Int_{\Q}$-decomposable (the same conclusion follows by the application of either Theorem \ref{Int-decomp classification} or Theorem \ref{Int_K-decomposability Theorem}). Conversely, if $A$ is $\Int_{\Q}$-decomposable then by the same theorem $A$ is a maximal $\Z$-order in $B$ and $B$ is a separable $\Q$-algebra, say $B=\bigoplus_{i=1}^r B_i$, where the $B_i$'s are the simple components of $B$. Since there is no proper unramified extension of $\Q$, the center of each $B_i$ is equal to $\Q$, by condition (ii) of Theorem \ref{Int-decomposable algebras over number fields}.

Now, for each $i=1,\ldots,r$, by the Hasse-Brauer-Noether-Albert Theorem \cite[Thm. 32.11]{Reiner} either $B_i\cong M_n(\Q)$ or there are at least two primes of $\Q$ (finite or infinite) which ramify in $B_i$ (see \cite[Chap 8, \S 32.1, Exer. 1]{Reiner}). The latter situation cannot occur, because $B_i$ is unramified at each finite place of its center $\Q$ by condition (iii) of Theorem \ref{Int-decomposable algebras over number fields}. Since this holds for each $i=1,\ldots,r$, we must have $B=\bigoplus_{i=1}^r M_n(\Q)$ (the fact that $n$ is the same for each simple component of $B$ is a consequence of condition (iv) of Theorem \ref{Int-decomposable algebras over number fields}). Finally, since $\Z$ is a PID, each maximal $\Z$-order of $M_n(\Q)$ is conjugate (hence isomorphic) to $M_n(\Z)$, so $A\cong\bigoplus_{i=1}^r M_n(\Z)$, as desired. The implication that $\Int_\Q(A) = \Int_\Q(M_n(\Z))$ is clear, and the last claim follows from Proposition \ref{IntKA=IntD implies A Int-decomposable} and Theorem \ref{When Int_K(A) = Int(D)}.
\end{proof}

The following example shows that the conclusion of Corollary \ref{IntQ decomposable algebras} may fail if we work over a number field which is a proper extension of $\Q$.

\begin{Ex}
Let $K=\Q(\sqrt{5})$. Consider the standard quaternion algebra $B$ over $K$, which is $B= K \oplus K\mathbf{i} \oplus K\mathbf{j} \oplus K\mathbf{k}$, where $\mathbf{i}^2 = \mathbf{j}^2 = -1$ and $\mathbf{i}\mathbf{j} = \mathbf{k} = -\mathbf{j}\mathbf{i}$. This algebra is unramified at each finite prime of the ring of integers $D$ of $K$ but is ramified at the two infinite real primes. Hence, any maximal $D$-order $A$ in $B$ is $\Int_K$-decomposable, by Theorem \ref{Int-decomposable algebras over number fields}, but $A$ cannot be isomorphic to a direct sum of matrix rings because $B$ is a division ring.
\end{Ex}

\noindent{\bf Acknowledgments} This work has been supported by  the grant ``Assegni Senior'' of the University of Padova. The authors wish to thank Daniel Smertnig for pointing out the results which led to Corollary \ref{IntQ decomposable algebras}.

%%%%%%%%%%%%%%%%%%%
%%%%%%%%%%%%%%%%%%%
%%%%%%%%%%%%%%%%%%%

\end{document}